\def\CC{{\mathbb C}}
\def\NN{{\mathbb N}}
\def\RR{{\mathbb R}}
\def\ZZ{{\mathbb Z}}
\def\e{\epsilon}
\def\e{\varepsilon}
\renewcommand{\phi}{\varphi}
\def\ones{\mathbbm{1}}
\newcommand{\dl}{\partial}
\newcommand{\til}{\widetilde}
\def\cone{\operatorname {cone}}
\def\spanset{\operatorname {span}}
\renewcommand{\Re}{\operatorname{Re}}
\renewcommand{\Im}{\operatorname{Im}}
\renewcommand{\epsilon}{\varepsilon}
\theoremstyle{definition}
\newtheorem{definition}{Definition}[section]
\newtheorem{example}[definition]{Example}
\theoremstyle{plain}
\newtheorem{theorem}[definition]{Theorem}
\newtheorem{lemma}[definition]{Lemma}
\newtheorem{proposition}[definition]{Proposition}
\newtheorem{corollary}[definition]{Corollary}
\newtheorem*{corollary*}{Corollary}
\title{Real Stability and Log Concavity are coNP-Hard}
\author[1]{Tracy Chin}
\affil[1]{Department of Mathematics, University of Washington, \href{mailto:tlchin@uw.edu}{tlchin@uw.edu}}
\date{\today}
\begin{document}
\maketitle

\begin{abstract}
    Real-stable, Lorentzian, and log-concave polynomials are well-studied classes of polynomials, and have been powerful tools in resolving several conjectures. We show that the problems of deciding whether a polynomial of fixed degree is real stable or log concave are coNP-hard. On the other hand, while all homogeneous real-stable polynomials are Lorentzian and all Lorentzian polynomials are log concave on the positive orthant, the problem of deciding whether a polynomial of fixed degree is Lorentzian can be solved in polynomial time.
\end{abstract}

\section{Introduction}
The theory of stable polynomials has been a key ingredient in seemingly unrelated mathematical advances. In the past few years, they were used to construct infinite families of Ramanujan graphs \cite{Marcus2015ramanujan, Marcus2018ramanujan}, and to resolve the Kadison-Singer problem \cite{Marcus2015kadisonsinger}.

Hyperbolic polynomials are a closely related class. They first arose in PDEs \cite{Garding1951pde}, but have also gained traction in the optimization community, for example as barrier functions for interior point methods \cite{guler1997interiorpoint}.

Beyond hyperbolic polynomials are the classes of Lorentzian and log-concave polynomials. Lorentzian polynomials in particular have close ties to combinatorial Hodge theory \cite{Huh2023hodge} and matroids \cite{anari2018logconcave3,brändén2022lorentzian}. They are also called completely log-concave polynomials, and they provide a bridge between continuous log concavity of a polynomial and discrete log concavity of its coefficients \cite{brändén2022lorentzian}. Because of this connection to discrete log concavity, they have been used to prove log concavity of several famous combinatorial sequences, including Kostka numbers \cite{Huh2022Schur} and the coefficients of the Alexander polynomial for certain types of links \cite{Hafner2023Alexander}.

While stable, completely log-concave, and log-concave polynomials are powerful tools, less is known about how to test if a given polynomial has these properties. In \cite{raghavendra2016real}, \citeauthor{raghavendra2016real} give a polynomial time algorithm for checking real stability of bivariate polynomials, but their techniques do not generalize easily to more variables. Instead, the results using stable polynomials in \cite{Marcus2015kadisonsinger,Marcus2015ramanujan,Marcus2018ramanujan} often start with a known construction for stable polynomials, then use a series of stability-preserving operations to reach the desired polynomial, rather than checking stability directly. It is relatively easy to generate particular stable polynomials, and \citeauthor{borcea2009stabilitypreserver} give a full classification of stability-preserving operations \cite{borcea2009stabilitypreserver}, enabling this style of argument for proving particular polynomials are real stable. Similarly, to prove Lorentzianity in \cite{Hafner2023Alexander}, \citeauthor{Hafner2023Alexander} construct a multivariate generalization of the Alexander polynomial in order to show that the single variable specialization has log-concave coefficients.

In this paper, we study the hierarchy of stable, completely log-concave, and log-concave polynomials through the lens of computational complexity. In previous works, \citeauthor{saunderson2019certifying} showed that it is coNP-hard to decide if a polynomial is hyperbolic with respect to a given direction \cite{saunderson2019certifying}, and \citeauthor{Ahmadi_2011} showed that it is NP-hard to test if a polynomial is convex \cite{Ahmadi_2011}. We extend and refine the techniques and results from these papers to get computational complexity results on testing real stability and log concavity.

In \cref{sec:stability}, we show that it is coNP-hard to determine if a homogeneous polynomial is real stable. The main result is \cref{thm:cubic-stable-hard}, which states that it is coNP-hard to decide if a homogeneous cubic with rational coefficients is real stable. As an easy consequence, we then get \cref{cor:stability-conp-hard}, which states that deciding if a homogeneous polynomial is real stable is coNP-hard in all degrees $d \geq 3$.

By contrast, deciding whether a polynomial is completely log concave can be done in polynomial time (\cref{sec:complete-log concavity}). Specifically, in \cref{prop:clc-poly-time} we give an explicit algorithm to decide if a homogeneous polynomial of degree $d$ is completely log concave that runs in time $O(n^{d+1})$.

On the other hand, log concavity is again coNP-hard (\cref{sec:quartic-log-concavity}). In \cref{thm:quartic-lc-hard}, we show that it is coNP-hard to decide if a homogeneous polynomial of degree four is log concave on $\RR^n_{\geq 0}$, and we generalize this result to all degrees $\geq 4$ in \cref{cor:lc-hard-higher-deg}.

Finally, in \cref{sec:log concavity-cubic}, we study log concavity properties of cubic polynomials. Specifically, we show that we can decide if a homogeneous cubic polynomial is log concave in polynomial time, but that the weaker property of directional log concavity is coNP-hard.

\section{Preliminaries}
\subsection{Notation}
Throughout this paper, we use $\RR_{\geq 0}[x_1,\dots,x_n]$ to denote the set of polynomials in variables $x_1,\dots,x_n$ with nonnegative coefficients. We use $\RR[x_1,\dots,x_n]_d$ to denote the set of polynomials with real coefficients that are homogeneous of degree $d$.

If $f\in\RR[x_1,\dots,x_n]$, we use the shorthand $\partial_i f \coloneq \frac{\dl}{\dl x_i}f$ and analogously $\dl_i^k f \coloneq \frac{\dl^k}{\dl x_i^k}f$. For $\alpha = (\alpha_1,\dots,\alpha_n) \in \NN^n$, we use the shorthand $\dl^\alpha f \coloneq \dl_1^{\alpha_1}\dots\dl_n^{\alpha_n} f$.

\subsection{Log-Concave and Lorentzian Polynomials}
\begin{definition}\label{def:log concavity}
    A polynomial $g \in \RR_{\geq 0}[x_1,\dots,x_n]$ is \textit{log concave} if $\log(g)$ is a concave function over $\RR^n_{> 0}$. Equivalently, $g$ is log concave if and only if $\nabla^2\log(g) \preceq 0$ at all points of $\RR^n_{\geq 0}$ where it is defined.
\end{definition}

For homogeneous polynomials of degree $d \geq 2$, one can also characterize log concavity using the Hessian of $f$.
\begin{theorem}[\cite{brändén2022lorentzian}, Proposition 2.33]\label{thm:log concave-hessian}
    If $f$ is a homogeneous polynomial in $n \geq 2$ variables of degree $d\geq 2$, then the following are equivalent for any $w\in\RR^n$ satisfying $f(w) > 0$.
    \begin{enumerate}[(1)]
        \item The Hessian of $f^{1/d}$ is negative semidefinite at $w$.
        \item The Hessian of $\log f$ is negative semidefinite at $w$.
        \item The Hessian of $f$ has exactly one positive eigenvalue at $w$.
    \end{enumerate}
\end{theorem}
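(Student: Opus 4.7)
The plan is to express conditions (1) and (2) as semidefiniteness statements for specific rank-one modifications of $H := \nabla^2 f(w)$, then use Euler's identity together with two short linear-algebra arguments: a completion of squares for (1) $\iff$ (2), and an inertia calculation on a bordered matrix for (2) $\iff$ (3). Setting $v := \nabla f(w)$ and differentiating,
\[\nabla^2 f^{1/d}(w) = \tfrac{f(w)^{1/d-1}}{d}\Bigl(H - \tfrac{d-1}{d f(w)}\, vv^T\Bigr), \qquad \nabla^2 \log f(w) = \tfrac{1}{f(w)}\Bigl(H - \tfrac{1}{f(w)}\, vv^T\Bigr),\]
so, since $f(w)>0$, conditions (1) and (2) are equivalent to $M_1 := H - \tfrac{d-1}{d f(w)}\, vv^T \preceq 0$ and $M_2 := H - \tfrac{1}{f(w)}\, vv^T \preceq 0$ respectively. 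Euler's identity applied to $f$ and to each $\partial_i f$ gives $v^T w = d f(w)$ and $H w = (d-1) v$; in particular $M_1 w = 0$ and $w^T H w = d(d-1) f(w) > 0$, so $H$ already has at least one positive eigenvalue.

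For (1) $\iff$ (2), I would decompose $y = \alpha w + y'$ with $y' \in w^\perp$ (possible since $w \neq 0$) and expand; the two Euler relations give
\[y^T M_2 y = -d f(w)\Bigl(\alpha + \tfrac{v^T y'}{d f(w)}\Bigr)^2 + {y'}^T M_1 y'.\]
Since the first summand is non-positive and attains $0$ for a suitable choice of $\alpha$, $M_2 \preceq 0$ is equivalent to ${y'}^T M_1 y' \leq 0$ for every $y' \in w^\perp$, which is the same as $M_1 \preceq 0$ because $M_1 w = 0$.

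For (2) $\iff$ (3), I would work with the bordered matrix $\tilde K := \bigl(\begin{smallmatrix} H & v \\ v^T & f(w) \end{smallmatrix}\bigr)$ and compute its inertia two ways. The shear $S := \bigl(\begin{smallmatrix} I & -w/(d-1) \\ 0 & 1 \end{smallmatrix}\bigr)$ satisfies $S^T \tilde K\, S = \operatorname{diag}\bigl(H,\, -f(w)/(d-1)\bigr)$, a one-line check using $H w = (d-1) v$ and $v^T w = d f(w)$, so by Sylvester's law of inertia $\tilde K$ has inertia $(n_+(H),\, n_-(H) + 1,\, n_0(H))$. Separately, Haynsworth additivity applied to the Schur complement of the positive scalar $f(w)$ in $\tilde K$ gives $\operatorname{inertia}(\tilde K) = (1, 0, 0) + \operatorname{inertia}(M_2)$. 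Equating the two expressions yields $n_+(M_2) = n_+(H) - 1$, so $M_2 \preceq 0 \iff n_+(H) = 1$, which (since $n_+(H) \geq 1$ automatically) is precisely condition (3).

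The main non-obvious step is the choice of the shear $S$, but it is essentially forced: requiring the off-diagonal block of $S^T \tilde K\, S$ to vanish amounts to $Ha + v = 0$ for the shear vector $a$, and via $H w = (d-1) v$ this uniquely determines $a = -w/(d-1)$. Everything else reduces to routine algebraic manipulation.
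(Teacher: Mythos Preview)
The paper does not supply its own proof of this theorem; it is quoted as Proposition~2.33 of \cite{brändén2022lorentzian} and used as a black box. There is therefore nothing in the paper to compare your argument against.

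That said, your proof is correct. The Hessian formulas for $f^{1/d}$ and $\log f$ are standard, and the two Euler identities $v^T w = d f(w)$, $Hw = (d-1)v$ are exactly what is needed. For (1) $\iff$ (2), the completion-of-squares identity
\[
y^T M_2 y \;=\; -d f(w)\Bigl(\alpha + \tfrac{v^T y'}{d f(w)}\Bigr)^2 + {y'}^T M_1 y'
\]
checks out term by term, and since $M_1 w = 0$ the restriction of $M_1$ to $w^\perp$ determines its semidefiniteness. For (2) $\iff$ (3), both congruence/Schur computations on the bordered matrix are clean: the shear $S$ indeed yields $S^T\tilde K S = \operatorname{diag}(H,\,-f(w)/(d-1))$, and the Schur complement of the positive scalar $f(w)$ is precisely $M_2$, so Haynsworth gives $n_+(M_2) = n_+(H)-1$. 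The only cosmetic quibble is the remark that $a=-w/(d-1)$ is ``uniquely determined'' by $Ha+v=0$; this need not be true if $H$ is singular, but it is irrelevant to the argument since you verify the congruence directly.
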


Complete log concavity is a stronger condition that requires that both the polynomial and all its directional derivatives be log concave.
\begin{definition}[\cite{anari2018logconcave}]\label{def:completely-log concave}
    A polynomial $g\in\RR[x_1,\dots,x_n]$ is \textit{completely log concave} if for every $k \geq 0$ and nonnegative matrix $V \in \RR^{n\times k}_{\geq 0}$, $D_Vg(z)$ is nonnegative and log concave as a function over $\RR^n_{> 0}$, where
    \[D_Vg(z) = \left(\prod_{j=1}^k\sum_{i=1}^n V_{ij}\partial_i\right)g(z).\]
\end{definition}
Homogeneous completely log-concave polynomials are also called Lorentzian polynomials. The equivalence of the definitions is proved in \cite[Theorem 2.30]{brändén2022lorentzian}. If $f$ is multiaffine, then log concavity and complete log concavity coincide, and $f$ is log concave if and only if it is completely log concave.

\subsection{Hyperbolic Polynomials}
\begin{definition}
    A homogeneous polynomial $p \in \RR[x_1,\dots,x_n]_d$ is \textit{hyperbolic with respect to $e\in\RR^n$} if $p(e) > 0$ and for all $x \in \RR^n$, the univariate polynomial $p(x+te) \in \RR[t]$ is real rooted.
\end{definition}

If $p$ is hyperbolic with respect to $e$, then we define the associated \textit{hyperbolicity cone}
\[\Lambda_+(p,e) = \{x\in\RR^n : \text{all roots of } p(te-x) \text{ are positive}\}.\]

The following is a classic result on hyperbolic polynomials, due to \citeauthor{garding1959hyperbolic} \cite{garding1959hyperbolic}.

\begin{theorem}[\cite{garding1959hyperbolic}, Theorem 2]\label{thm:garding-hyperbolicity-cone}
    If $p$ is hyperbolic with respect to $e$, then it is also hyperbolic with respect to $x$ for any $x \in \Lambda_+(p,e)$. Moreover, the closure of the hyperbolicity cone $\overline{\Lambda_+}(p,e)$ can be described explicitly as the closure of the connected component of $\RR^n \setminus \{x\in\RR^n : p(x) = 0\}$ containing $e$.
\end{theorem}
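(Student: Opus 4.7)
The plan is to follow Gårding's classical argument, splitting the statement into (a) openness of $\Lambda_+(p,e)$, (b) hyperbolicity of $p$ with respect to every $x_0 \in \Lambda_+(p,e)$ (with convexity as a byproduct), and (c) the topological characterization of $\overline{\Lambda_+}(p,e)$. Throughout, I would write $p(te - x) = p(e)\prod_{i=1}^d(t - \lambda_i(x))$, so that $x \in \Lambda_+(p,e)$ iff every $\lambda_i(x) > 0$, and, setting $t=0$, $p(x) = p(e)\prod_i \lambda_i(x)$. Step (a) is then immediate: the multiset $\{\lambda_1(x),\dots,\lambda_d(x)\}$ depends continuously on $x$, so strict positivity of every $\lambda_i(x)$ is an open condition.

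For step (b), fix $x_0 \in \Lambda_+(p,e)$ and $y \in \RR^n$; the target is to show $p(y + tx_0) \in \RR[t]$ is real rooted. The approach is to study the bivariate polynomial $P(s,t) := p(y + se + tx_0)$. For every real $t_0$, hyperbolicity of $p$ with respect to $e$ applied to $y + t_0 x_0$ shows that $P(s,t_0)$ is real rooted in $s$, equivalently $P(\cdot,t_0)$ has no zero with $\Im(s) > 0$. I would then exploit the positivity of the $\lambda_i(x_0)$ to run a continuity-plus-Hurwitz argument along a path from $e$ to $x_0$ inside $\Lambda_+(p,e)$, showing that $P$ has no zeros in the bidisk $\{\Im(s) > 0\} \times \{\Im(t) > 0\}$. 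This real stability of $P$ forces $P(s_0,t)$ to be real rooted in $t$ for every fixed real $s_0$; taking $s_0 = 0$ gives the desired conclusion. Convexity of $\Lambda_+(p,e)$ then follows by the same kind of argument applied to segments between pairs of points in $\Lambda_+(p,e)$, combined with a sign check at the endpoints that keeps the $\lambda_i$ from crossing zero.

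For step (c), I would combine openness, connectedness (from convexity), and the identity $p(x) = p(e)\prod \lambda_i(x)$. On one hand, $\Lambda_+(p,e)$ is a connected open subset of $\{x : p(x) \neq 0\}$ containing $e$, so it lies inside the connected component $C$ of that set containing $e$. On the other hand, $\Lambda_+(p,e)$ is closed in $\{x : p(x) \neq 0\}$: if $x_n \to x_0$ with $x_n \in \Lambda_+(p,e)$ and $p(x_0) \neq 0$, the limits of the $\lambda_i(x_n)$ are real and nonnegative by continuity, and nonzero because $p(x_0) \neq 0$, forcing $x_0 \in \Lambda_+(p,e)$. So $\Lambda_+(p,e)$ is clopen in $C$ and contains $e$, whence $\Lambda_+(p,e) = C$ and $\overline{\Lambda_+}(p,e) = \overline{C}$.

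The main obstacle is the bivariate real-stability step inside (b). One starts with only one-dimensional information, namely real rootedness of $P(\cdot,t_0)$ for each real $t_0$, and must upgrade it to a genuine two-dimensional zero-free region. This upgrade cannot succeed for arbitrary directions $x_0$, since without $x_0 \in \Lambda_+(p,e)$ the restriction $P$ need not be real stable; so the positivity assumption on the $\lambda_i(x_0)$ must enter crucially. The natural tool is Hurwitz's theorem on limits of zero-free holomorphic families, applied to a homotopy of bivariate restrictions along a path from $e$ to $x_0$ inside $\Lambda_+(p,e)$, where at $\tau=0$ the restriction $p(y+se+te) = p(y + (s+t)e)$ is manifestly zero free on the product of upper half-planes.
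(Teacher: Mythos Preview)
The paper does not supply its own proof of this theorem: it is stated as a classical result and attributed to G\aa rding \cite{garding1959hyperbolic}, with no argument given in the text. There is therefore nothing in the paper to compare your proposal against.

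That said, your outline is essentially G\aa rding's original argument and is sound. The decomposition into (a) openness via continuity of the root multiset, (b) hyperbolicity at every $x_0\in\Lambda_+(p,e)$ via real stability of the bivariate restriction $P(s,t)=p(y+se+tx_0)$, and (c) the clopen argument identifying $\Lambda_+(p,e)$ with the connected component of $\{p\neq 0\}$ through $e$, is exactly the classical route. One small point worth tightening in (b): before you have convexity you cannot assume an arbitrary path in $\Lambda_+(p,e)$ from $e$ to $x_0$, but the straight segment $x_\tau=(1-\tau)e+\tau x_0$ does lie in $\Lambda_+(p,e)$ by the direct computation $\lambda_i(x_\tau)=(1-\tau)+\tau\lambda_i(x_0)>0$, which also gives $p(x_\tau)>0$ so the leading coefficients never vanish along the homotopy. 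With that in hand your Hurwitz deformation from $P_0(s,t)=p(y+(s+t)e)$ to $P_1=P$ goes through, and the rest of your sketch is correct.
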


\begin{example}
    Let $p(x_0,x_1,\dots,x_n) = x_0^2 - \sum_{i=1}^n x_i^2$. Then $p$ is hyperbolic with respect to $e = (1,0,\dots,0)$. Indeed, for any $w\in\RR^{n+1}$, then the univariate restriction $p(w+te)$ is
    \[p(w+te) = (t+w_0)^2 - \sum_{i=1}^nw_i^2.\]
    Since $\sum_{i=1}^n w_i^2 \geq 0$, this polynomial is real rooted, so $p$ is hyperbolic with respect to $e$.

    In this example, the hyperbolicity cone is
    \[
        \Lambda_+(p,e) = \{(x_0,x)\in\RR^{n+1} : x_0 > \lVert x\rVert_2\}.
    \]
    Indeed,
    \[p(te - (x_0,x)) = (t-x_0)^2 - \sum_{i=1}^n x_i^2,\]
    which has roots $t = x_0 \pm \sqrt{\sum_{i=1}^n x_i^2} = x_0 \pm \lVert x\rVert_2,$ so both roots are positive if and only if $x_0 > \lVert x\rVert_2$. This agrees with the characterization from \cref{thm:garding-hyperbolicity-cone}, since $p(x_0,x) = 0$ if and only if $x_0^2 = \lVert x\rVert^2$, and $e=(1,0,\dots,0)$ is in the connected component where $x_0 > \lVert x \rVert$.
\end{example}

Throughout this paper, when we say $p$ is hyperbolic with respect to a cone $K$, it means that $p$ is hyperbolic with respect to every $x\in K$.

\subsection{Stable Polynomials}
\begin{definition}
    A polynomial $p \in \CC[x_1,\dots,x_n]$ is \textit{stable} if $p(z_1,\dots,z_n) \neq 0$ whenever $z_1,\dots,z_n$ all have strictly positive imaginary part. If additionally $p$ has real coefficients, then we say that $p$ is \textit{real stable}.
\end{definition}

A homogeneous polynomial is real stable if and only if it is hyperbolic with respect to $\RR^n_{>0}$. More generally, a polynomial $p\in\RR[x_1,\dots,x_n]$ is real stable if and only if its homogenization $p^{\hom}(x_0,x_1,\dots,x_n) = x_0^{\deg(p)}p(\frac{x_1}{x_0},\dots,\frac{x_n}{x_0})$ is hyperbolic with respect to the cone $\{0\}\times\RR^{n}_{>0}\subseteq \RR^{n+1}$ \cite[Proposition 1.1]{borceabranden2010Weyl}. For the sake of completeness, we reproduce the proof of the result in the homogeneous case here.
\begin{proposition}
    Let $f\in\RR[x_1,\dots,x_n]$ be a homogeneous polynomial of degree $d$. Then $f$ is real stable if and only if it is hyperbolic with respect to $\RR^n_{>0}$.
\end{proposition}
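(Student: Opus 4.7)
The plan is to prove both implications via a univariate restriction, exploiting that when $f$ is homogeneous of degree $d$, the polynomial $g(t) := f(x + te)$ has degree $d$ in $t$ with leading coefficient exactly $f(e)$.

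For the ``hyperbolic w.r.t.\ $\RR^n_{>0}$ implies real stable'' direction, given $z = x + iy$ with $y \in \RR^n_{>0}$, I would use hyperbolicity with respect to $y$ to write $g(t) = f(y) \prod_{j=1}^d (t - r_j)$ with $r_j \in \RR$ and $f(y) > 0$, then evaluate at $t = i$. Each factor $i - r_j$ has imaginary part $1$, so the product is nonzero, giving $f(z) \neq 0$.

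For the reverse direction, I would fix $e \in \RR^n_{>0}$ and $x \in \RR^n$ and view $g(t) = f(x + te)$ as a polynomial in a complex variable $t$. The argument $x + te$ has componentwise imaginary part $\Im(t)\,e$, which lies in $\RR^n_{>0}$ when $\Im(t) > 0$, so real stability forces $g(t) \neq 0$ there; complex conjugation together with the reality of $f$'s coefficients handles $\Im(t) < 0$. Hence every root of $g$ is real. Specializing the same argument to $x = 0$ and $t = i$ gives $f(e) \neq 0$, so $g$ has degree exactly $d$ with $d$ real roots, proving hyperbolicity with respect to $e$.

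The only step I expect to require care is the strict inequality $f(e) > 0$ built into the paper's definition of hyperbolicity: real stability on its own only yields $f(e) \neq 0$ (for instance $-x_1 x_2$ is real stable yet negative on $\RR^2_{>0}$). The fix is easy — real stability is preserved under $f \mapsto -f$ since it depends only on the zero set, and continuity together with connectedness of $\RR^n_{>0}$ forces $f$ to have constant sign on the orthant — but it is the one place where a convention must be acknowledged; everything else reduces to the two short factorization/conjugation steps above.
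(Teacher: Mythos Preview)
Your argument is correct and is essentially the same univariate-restriction idea as the paper's proof, only phrased directly rather than by contrapositive. You are also right to flag the sign issue: the paper's definition of hyperbolicity requires $f(e)>0$, the paper's own proof silently ignores this, and your fix (constant sign on the connected orthant, so work with $-f$ if necessary) is the standard repair.
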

\begin{proof}
    First, suppose that $f$ is not real stable. Then there exists some $z\in\CC^n$ with $\Im(z_1),\dots,\Im(z_n)>0$ such that $f(z) = 0$. Let $v = \Im(z) \in \RR^n_{>0}$ and $x = \Re(z) \in \RR^n$. Then $f(iv + x) = f(z) = 0$, so the univariate polynomial $f(tv + x)$ has a root at $t = i$. Thus, $f(tv+x)$ is not real rooted, so $f$ is not hyperbolic with respect to $v\in\RR^n_{>0}$.

    Now, suppose that $f$ is not hyperbolic with respect to some $v \in \RR^n_{>0}$. Then there exists $x\in\RR^n$ such that $f(tv + x)$ has a root $t_0 = a + bi$ with $b \neq 0$. Since $f(tv+x)$ has real coefficients, then the roots of $f(tv+x)$ come in conjugate pairs, so we may assume without loss of generality that $b > 0$. Let $z_i = t_0v_i + x_i$, so $f(z_1,\dots,z_n) = 0$ and $\Im(z_i) = bv_i$. Since $b > 0$ and $v\in\RR^n_{>0}$, then $\Im(z_i) > 0$ for all $i$, so $f$ is not stable.
\end{proof}

\subsection{Hierarchy of Polynomials}
Let $p \in \RR_{\geq 0}[x_1,\dots,x_n]_d$ be a homogeneous polynomial. As we saw above, $p$ is real stable if and only if it is hyperbolic with respect to $\RR^n_{> 0}$. Moreover, if $p$ is hyperbolic with respect to some cone $K$, then it is also completely log concave with respect to $K$ \cite[Theorem 11]{dey2023polynomials}. Finally, if $p$ is completely log concave, then it is by definition also log concave by taking $k = 0$ in \cref{def:completely-log concave}.

When $d=2$, this hierarchy collapses, and $p$ is stable if and only if it is log concave. Also, we show in \cref{sec:homogeneous-cubics} that if $d = 3$, then $p$ is log concave if and only if it is completely log concave. However, for $d \geq 4$, all of the containments are strict.

\section{Computational Complexity of Stability}\label{sec:stability}
In \cite{raghavendra2016real}, \citeauthor{raghavendra2016real} give a deterministic algorithm that runs in time $O(d^5)$ to determine if a bivariate polynomial $p\in\RR[x,y]$ of degree $d$ is real stable. They also ask whether their algorithm can be generalized to three or more variables.

In this section, we show that it is coNP-hard to test whether a homogeneous cubic with rational coefficients is real stable. To do this, we will leverage the following result from \cite{saunderson2019certifying}:
\begin{theorem}[\cite{saunderson2019certifying}, Proposition 5.1]\label{thm:hyp-iff-max-le-1}
    If $p(x_0,x) = x_0^3 - 3x_0\lVert x\rVert^2 + 2q(x)$, where $q \in \RR[x_1,\dots,x_n]_3$, then $p$ is hyperbolic with respect to $e_0$ if and only if $\max_{\lVert x\rVert^2 = 1} |q(x)| \leq 1$.
\end{theorem}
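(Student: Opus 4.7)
The plan is to reduce hyperbolicity of $p$ with respect to $e_0$ to a one-parameter family of univariate cubic inequalities, then solve those inequalities in closed form.

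First, I would unpack the definition of hyperbolicity with respect to $e_0 = (1,0,\dots,0)$. Note that $p(e_0) = 1 > 0$, so that part of the definition is free. Next, for any $(x_0,x) \in \RR^{n+1}$, the univariate polynomial $p(te_0 + (x_0,x)) = p(t+x_0, x)$ has the same roots (shifted) as $p(t,x)$, so hyperbolicity with respect to $e_0$ is equivalent to the statement that $t \mapsto p(t,x) = t^3 - 3t\lVert x\rVert^2 + 2q(x)$ is real rooted for every $x \in \RR^n$.

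Next I would use homogeneity to normalize. When $x = 0$, the polynomial in $t$ is just $t^3$, which is real rooted. For $x \neq 0$, since $p$ is homogeneous of degree $3$, we have
\[
p(t,x) = \lVert x\rVert^3 \left(s^3 - 3s + 2q(u)\right),
\]
where $s = t/\lVert x\rVert$ and $u = x/\lVert x\rVert$ is a unit vector. Therefore hyperbolicity with respect to $e_0$ is equivalent to the depressed cubic $h_c(s) = s^3 - 3s + c$ being real rooted for every value $c = 2q(u)$ attained as $u$ ranges over the unit sphere.

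The final step is the elementary one-variable analysis of $h_c$. The derivative $h_c'(s) = 3(s-1)(s+1)$ shows $h_c$ has a local max at $s = -1$ with value $c+2$ and a local min at $s = 1$ with value $c-2$. A real cubic with positive leading coefficient is real rooted if and only if its local max is nonnegative and its local min is nonpositive, giving the condition $|c| \leq 2$. Applying this with $c = 2q(u)$ yields $|q(u)| \leq 1$ for every unit vector $u$, i.e.\ $\max_{\lVert x\rVert^2 = 1}|q(x)| \leq 1$, as claimed.

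There is no real obstacle here; the proof is essentially a computation once the homogeneity-based rescaling is done. The only mild subtlety is remembering that real-rootedness of $p(t+x_0,x)$ is the same as real-rootedness of $p(t,x)$, which allows us to ignore the $x_0$ coordinate from the start, and that the $x = 0$ case must be handled separately before the normalization by $\lVert x\rVert$.
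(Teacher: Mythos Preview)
The paper does not prove this statement itself; it is quoted as Proposition~5.1 of \cite{saunderson2019certifying} and used as a black box. Your argument is correct and is the natural one: after the shift in $x_0$ and the homogeneity rescaling, hyperbolicity with respect to $e_0$ reduces to real-rootedness of the depressed cubic $s^3 - 3s + c$ for every $c = 2q(u)$ with $\lVert u\rVert = 1$, and the local-extremum analysis gives exactly $|c| \le 2$.
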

In \cite{saunderson2019certifying}, \citeauthor{saunderson2019certifying} proves this result and uses it to show that it coNP-hard to decide if a homogeneous cubic polynomial is hyperbolic in a given direction. In this section, we use similar techniques to show that stability is also coNP-hard.

To apply this result, we will construct a polyhedral cone $K$ such that $p$ is hyperbolic with respect to $e_0$ if and only if it is hyperbolic with respect to $K$. After an appropriate change of variables, this is equivalent to being stable. 

For $\epsilon > 0$, let $L_\epsilon \subseteq\RR^{n+1}$ denote the Lorentz cone
\[
    L_\epsilon = \{(x_0,\vec{x}) \in \RR^{n+1} : x_0 \geq \frac{1}{\epsilon}\lVert x\rVert\}.
\]

\begin{lemma}\label{lem:nonneg-on-lorentz-cone}
    Let $p(x_0,x) = x_0^3 - 3x_0 \lVert x\rVert^2 + 2q(x)$, where $q(x) \in \RR[x_1,\dots,x_n]_3$ is a homogeneous cubic. Let $N$ be the largest absolute value of a coefficient of $q$, and let $0 < \epsilon < \min(\frac{1}{2Nn^3}, \frac{1}{2})$. Then $p(\lVert x\rVert, \epsilon x) > 0$ for all $x \in \RR^n\setminus\{0\}$.
\end{lemma}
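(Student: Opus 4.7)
The plan is to compute $p(\|x\|, \epsilon x)$ explicitly, exploit homogeneity to reduce to an estimate on $|q(x)|$, and then verify that the two upper bounds on $\epsilon$ suffice to make the resulting expression strictly positive.

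First I would expand
\[
  p(\|x\|,\epsilon x) = \|x\|^3 - 3\|x\|\cdot\epsilon^2\|x\|^2 + 2\epsilon^3 q(x) = \|x\|^3(1-3\epsilon^2) + 2\epsilon^3 q(x).
\]
So the claim reduces to showing $\|x\|^3(1-3\epsilon^2) + 2\epsilon^3 q(x) > 0$ for every nonzero $x$. By homogeneity of both sides of degree $3$, I can either rescale to $\|x\|=1$ or just carry the factor of $\|x\|^3$ along.

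Next I would bound $|q(x)|$ crudely in terms of $N$ and $\|x\|$. Writing $q(x) = \sum_{1\le i\le j\le k\le n} c_{ijk}x_ix_jx_k$, each coefficient satisfies $|c_{ijk}|\le N$ and each monomial $|x_ix_jx_k|\le \|x\|_\infty^3 \le \|x\|^3$. Since there are at most $n^3$ degree-three monomials in $n$ variables, this gives $|q(x)| \le Nn^3\|x\|^3$. Hence
\[
  p(\|x\|,\epsilon x) \ge \|x\|^3\bigl(1 - 3\epsilon^2 - 2\epsilon^3 N n^3\bigr),
\]
and it suffices to show the parenthesized quantity is positive.

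Finally I would use the two hypotheses on $\epsilon$ separately. The bound $\epsilon < \frac{1}{2Nn^3}$ gives $2Nn^3\epsilon < 1$, hence $2\epsilon^3 N n^3 = (2Nn^3\epsilon)\cdot\epsilon^2 < \epsilon^2$. Combined with $\epsilon < \tfrac12$, which yields $4\epsilon^2 < 1$, this produces
\[
  1 - 3\epsilon^2 - 2\epsilon^3 Nn^3 \;>\; 1 - 3\epsilon^2 - \epsilon^2 \;=\; 1 - 4\epsilon^2 \;>\; 0,
\]
completing the argument. I do not expect any serious obstacle here; the only thing to be careful about is the monomial count $n^3$ (one could instead use the sharper $\binom{n+2}{3}$, but $n^3$ is what matches the hypothesis on $\epsilon$) and keeping the two roles of $\epsilon$ — taming the cubic term via $\frac{1}{2Nn^3}$ and taming the quadratic term via $\tfrac12$ — cleanly separated.
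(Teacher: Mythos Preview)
Your proposal is correct and follows essentially the same approach as the paper: expand $p(\|x\|,\epsilon x)$, bound $|q(x)|\le Nn^3\|x\|^3$ using the coefficient bound and the monomial count, and then use the two hypotheses on $\epsilon$ to conclude $1-3\epsilon^2-2\epsilon^3 Nn^3>1-4\epsilon^2>0$. The paper's version differs only cosmetically, normalizing to $\|x\|=1$ and passing through the observation $\min_{\|x\|=1}q(x)=-\max_{\|x\|=1}q(x)$ before making the same estimate.
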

\begin{proof}
    By homogeneity,
    \begin{align*}
        p(\lVert x\rVert, \epsilon x) &= \lVert x\rVert^3 -3 \lVert x\rVert(\epsilon^2 \lVert x\rVert^2) + 2q(\epsilon x)\\
            &= \lVert x\rVert^3\left(1 - 3\epsilon^2 + 2\epsilon^3 q\left(x/\lVert x\rVert\right)\right).
    \end{align*}
    Therefore, $p(\lVert x\rVert, \epsilon x) > 0$ if and only if $1-3\epsilon^2 + 2\epsilon^3 q(\frac{x}{\lVert x\rVert}) > 0$, and so $p(\lVert x\rVert, \epsilon x) > 0$ for all $x \in \RR^n\setminus\{0\}$ if and only if
    \[1 - 3\epsilon^2 + 2\epsilon^3\left(\min_{\lVert x\rVert = 1} q(x)\right) >0.\]

    Since $q$ is homogeneous of degree three, $\min_{\lVert x \rVert = 1} q(x) = -\max_{\lVert x\rVert = 1} q(x)$, so $p(\lVert x\rVert, \epsilon x) > 0$ for all $x \neq 0$ if and only if
    \[1 - 3 \epsilon^2 - 2\epsilon^3 \left(\max_{\lVert x\rVert = 1} q(x)\right) > 0.\]

    Since $q$ is homogeneous of degree three, it has at most $\binom{n+2}{3} \leq n^3$ terms. Moreover, if $\lVert x\rVert = 1$, each monomial has absolute value at most one, so each term is $\leq N$. Thus, $\max_{\lVert x\rVert = 1} q(x) \leq Nn^3$.

    Therefore,
    \[
        1 - 3\epsilon^2 - 2\epsilon^3 \left(\max_{\lVert x\rVert = 1} q(x)\right) \geq 1-3\epsilon^2 - 2\epsilon^3 Nn^3.
    \]
    Since $\epsilon < \frac{1}{2Nn^3}$ and $\epsilon < \frac{1}{2}$,
    \[1-3\epsilon^2 - 2\epsilon^3 Nn^3 > 1 - 3\epsilon^2 -2\epsilon^2\frac{1}{2Nn^3}(Nn^3) = 1-4\epsilon^2 > 0.\]

    Therefore, for any $0 < \epsilon < \min(\frac{1}{2Nn^3},\frac{1}{2})$, $p(\lVert x\rVert, \epsilon x) > 0$ for all $x \in \RR^n\setminus\{0\}$.
\end{proof}

\begin{proposition}\label{prop:hyp-wrt-Lorentz}
    Let $p(x_0,x) = x_0^3 - 3x_0 \lVert x\rVert^2 + 2q(x)$, where $q(x)$ is a homogeneous polynomial of degree three. Let $N$ be the largest absolute value of a coefficient of $q$, and let $0 < \epsilon < \min(\frac{1}{2Nn^3}, \frac{1}{2})$. Then $p$ is hyperbolic with respect to $e_0$ if and only if it is hyperbolic with respect to $L_{\epsilon}$.
\end{proposition}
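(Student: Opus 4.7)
The reverse implication is immediate, since $e_0 \in L_\epsilon$ (indeed $\lVert 0\rVert / \epsilon = 0 \le 1$), so hyperbolicity with respect to the whole cone $L_\epsilon$ already includes hyperbolicity with respect to $e_0$. The work is in the forward direction. My plan is to use Gårding's description of the hyperbolicity cone (\cref{thm:garding-hyperbolicity-cone}): if I can show that $L_\epsilon \setminus \{0\}$ is contained in the connected component of $\RR^{n+1} \setminus \{x : p(x) = 0\}$ containing $e_0$, then that component equals $\Lambda_+(p, e_0)$, and Gårding says $p$ is hyperbolic with respect to every point there. Since $L_\epsilon \setminus \{0\}$ is path connected (any point can be joined to $e_0 \in L_\epsilon$ by the line segment lying in the convex cone $L_\epsilon$, which avoids the origin because both endpoints have strictly positive first coordinate), it suffices to prove that $p$ does not vanish anywhere on $L_\epsilon \setminus \{0\}$.

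The crux is therefore to strengthen \cref{lem:nonneg-on-lorentz-cone} from the boundary $\partial L_\epsilon \setminus \{0\}$ to all of $L_\epsilon \setminus \{0\}$. The key observation is that every point of $L_\epsilon \setminus \{0\}$ is a boundary point of a smaller Lorentz cone to which the same lemma still applies. Concretely, given $(y_0, y) \in L_\epsilon \setminus \{0\}$: the cone condition forces $y_0 > 0$. If $y = 0$, then $p(y_0, 0) = y_0^3 > 0$ directly. Otherwise, set $\epsilon' = \lVert y\rVert / y_0$, which satisfies $0 < \epsilon' \le \epsilon < \min(\frac{1}{2Nn^3}, \frac{1}{2})$, and $(y_0, y) = (\lVert x'\rVert, \epsilon' x')$ for $x' = y/\epsilon'$. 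Since the hypothesis of \cref{lem:nonneg-on-lorentz-cone} only requires $\epsilon$ to lie below the explicit bound, the same statement and proof go through verbatim with $\epsilon'$ in place of $\epsilon$, yielding $p(y_0, y) > 0$.

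I do not anticipate any significant obstacle beyond recognizing this scaling reparametrization. Once $p > 0$ is established on all of $L_\epsilon \setminus \{0\}$, the proof concludes in one line: the path-connected set $L_\epsilon \setminus \{0\}$ does not cross the hypersurface $\{p = 0\}$, so it is trapped in the same connected component of $\RR^{n+1} \setminus \{p = 0\}$ as $e_0$, which by \cref{thm:garding-hyperbolicity-cone} is $\Lambda_+(p, e_0)$, and another application of Gårding's theorem then gives hyperbolicity of $p$ with respect to every point of $L_\epsilon \setminus \{0\}$.
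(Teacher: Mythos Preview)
Your proof is correct and follows essentially the same approach as the paper: both use G{\aa}rding's theorem to reduce the forward direction to showing $p>0$ on $L_\epsilon\setminus\{0\}$, and both handle an arbitrary point of $L_\epsilon$ by rewriting it as $(\lVert x'\rVert,\epsilon' x')$ for some $0<\epsilon'\le\epsilon$ so that \cref{lem:nonneg-on-lorentz-cone} applies with the smaller parameter. Your version is slightly more careful in separating out the case $y=0$ (where the paper's parametrization $x_0=\tfrac{1}{\delta}\lVert x\rVert$ would degenerate) and in spelling out the path-connectedness argument, but the substance is the same.
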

\begin{proof}
    For one direction, we note that $e_0 \in L_{\epsilon}$. Thus, if $p$ is hyperbolic with respect to $L_{\epsilon}$, then in particular it is hyperbolic with respect to $e_0$.

    Now, suppose that $p$ is hyperbolic with respect to $e_0$. Since the hyperbolicity cone $\Lambda_+(p,e_0)$ is exactly the connected component of $\{(x_0,x)\in\RR^{n+1} : p(x_0,x) \neq 0\}$ containing $e_0$, it suffices to show that $p(x_0,x) > 0$ on $L_{\epsilon}$.

    Let $(x_0,x) \in L_{\epsilon}$, so that $x_0 = \frac{1}{\delta}\lVert x\rVert \geq \frac{1}{\epsilon}\lVert x\rVert$ for some $0 < \delta \leq \epsilon$. Then in particular, $0 < \delta < \min(\frac{1}{2Nn^3},\frac{1}{2})$, so by \cref{lem:nonneg-on-lorentz-cone},
    \[
        p(x_0,x) = p(\tfrac{1}{\delta} \lVert x\rVert, x) = p\left(\left\lVert \frac{1}{\delta}x\right\rVert, \delta\left(\frac{1}{\delta}x\right)\right) > 0.
    \]
    Therefore, $p(x_0, x) > 0$ for all $(x_0,x) \in L_{\epsilon}$, so $L_{\epsilon}$ is contained in the connected component of $\RR^{n+1}\setminus \{x\in\RR^{n+1} : p(x) = 0\}$ containing $e_0$. Thus, if $p$ is hyperbolic with respect to $e_0$, then it is also hyperbolic with respect to $L_{\epsilon}$.
\end{proof}

\begin{figure}
\centering
\tdplotsetmaincoords{80}{120}
\begin{tikzpicture}[axis/.style={->,black,thick}, tdplot_main_coords]
    \def\x{1.4}
    \def\y{3.5}
    \def\R{\x+0.02}
    \def\yc{\y+1}
    \def\e{0.6}

    %% axes
    \draw[axis] (0,0,0) -- (0,\yc,0);
    \draw[axis] (0,0,0) -- (\yc,0,0);
    \draw[axis] (0,0,0) -- (0,0,\yc) node[anchor=west]{$e_0$};

    %% L_\epsilon

    \coordinate(O) at (0,0,0);
    \draw[canvas is xy plane at z=\y, draw=black, left color=yellow,right color=yellow,middle
        color=yellow!60!white, fill opacity=0.2] (\tdplotmainphi:\R) 
    arc(\tdplotmainphi:\tdplotmainphi+180:\R) -- (O) --cycle;
    \draw[canvas is xy plane at z=\y, draw=black, left color=yellow,right color=yellow,middle
        color=yellow!60!white, fill opacity=0.2] (\tdplotmainphi:\R) 
    arc(\tdplotmainphi:\tdplotmainphi-180:\R) -- (O) --cycle;
    %% K
    \begin{scope}[canvas is xy plane at z=\y]
        \draw[fill=red, opacity=0.3]
        (-\R,0) -- (0,\R) -- (O) --cycle;
        \draw[fill=red, opacity=0.3]
        (0,\R) -- (\R,0) -- (O) -- cycle;
        \draw[fill=red, opacity=0.3]
        (0,-\R) -- (\R,0) -- (O) -- cycle;
        \draw[fill=red, opacity=0.3]
        (0,-\R) -- (-\R,0) -- (O) -- cycle;
    \end{scope}
\end{tikzpicture}
\caption{Cones $L_\epsilon \supseteq K$, both containing $e_0$. These cones are constructed so that $p > 0$ on $L_\epsilon$ and $K$, so $p$ is hyperbolic with respect to $e_0$ if and only if it is hyperbolic with respect to $K$.}
\end{figure}
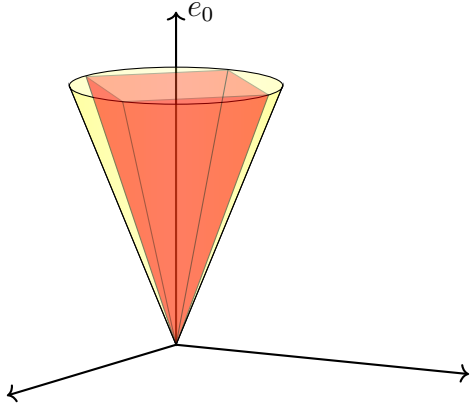

\begin{corollary}\label{cor:hyp-wrt-simplicial}
    Let $p$ and $\epsilon$ be as above, and let $K = \cone\{e_0 \pm \epsilon e_i : i = 1,\dots, n\}$. Then $p$ is hyperbolic with respect to $e_0$ if and only if $p$ is hyperbolic with respect to $K$.
\end{corollary}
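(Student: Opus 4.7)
The plan is to deduce the corollary directly from \cref{prop:hyp-wrt-Lorentz} by sandwiching the simplicial cone $K$ between $\{e_0\}$ and the Lorentz cone $L_\epsilon$. The reason this refinement is useful is that $K$ has only $2n$ extreme rays and is simplicial, so after an appropriate linear change of variables $K$ becomes the standard nonnegative orthant, which is exactly the setting in which hyperbolicity is equivalent to real stability.

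For the ($\Leftarrow$) direction, it suffices to observe that $e_0 \in K$, for instance via the identity $e_0 = \tfrac{1}{2}(e_0 + \epsilon e_1) + \tfrac{1}{2}(e_0 - \epsilon e_1)$. Hyperbolicity with respect to every element of $K$ then specializes to hyperbolicity with respect to $e_0$.

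For the ($\Rightarrow$) direction, the key step is to verify $K \subseteq L_\epsilon$. Each of the $2n$ generators $e_0 \pm \epsilon e_i$ of $K$ has first coordinate $1$ and spatial part of Euclidean norm $\epsilon$, so it satisfies $x_0 = \tfrac{1}{\epsilon}\lVert x\rVert$ and hence lies on the boundary of $L_\epsilon$. Since $L_\epsilon$ is a convex cone, it contains the conic hull of these generators, giving $K \subseteq L_\epsilon$. Given hyperbolicity of $p$ with respect to $e_0$, \cref{prop:hyp-wrt-Lorentz} yields hyperbolicity with respect to every point of $L_\epsilon$, and in particular with respect to every point of $K$.

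There is essentially no obstacle: the whole argument is the short chain of inclusions $\{e_0\}\subseteq K\subseteq L_\epsilon$ combined with the previous proposition. The only minor checks are the convexity of $L_\epsilon$, which is standard for Lorentz cones, and that $e_0$ is expressible as a conic combination of the generators of $K$, both of which are immediate.
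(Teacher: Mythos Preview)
Your proof is correct and follows essentially the same approach as the paper: both directions rest on the chain $\{e_0\}\subseteq K\subseteq L_\epsilon$ together with \cref{prop:hyp-wrt-Lorentz}. You simply spell out a couple of details (the explicit conic combination giving $e_0\in K$ and the verification that the generators of $K$ lie in $L_\epsilon$) that the paper leaves implicit.
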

\begin{proof}
    Since $e_0 \in K$, the if direction is immediate.
    
    Conversely, by \cref{prop:hyp-wrt-Lorentz}, if $p$ is hyperbolic with respect to $e_0$, then it is hyperbolic with respect to $L_\epsilon$. Since $K \subseteq L_\epsilon$, it follows that $p$ is hyperbolic with respect to $K$.
\end{proof}

\begin{theorem}\label{thm:hyp-iff-stable}
    Let $p(x_0,x) = x_0^3 -3x_0\lVert x\rVert^2 + 2q(x)$, where $q \in\RR[x_1,\dots,x_n]$ is homogeneous of degree three. Let $N$ be the largest absolute value of a coefficient of $q$, let $0<\epsilon < \min(\frac{1}{2Nn^3}, \frac{1}{2})$, and let $M$ be the $(n+1)\times 2n$ matrix whose columns are $e_0 \pm \epsilon e_i$ for $i = 1,\dots, n$. Define
    \[\til{p}(x_1,\dots,x_{2n}) = p(Mx) \in \RR[x_1,\dots,x_{2n}]_3.\]
    Then $p$ is hyperbolic with respect to $e_0$ if and only if $\til{p}$ is real stable.
\end{theorem}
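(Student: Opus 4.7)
The plan is to reduce the statement to \cref{cor:hyp-wrt-simplicial} combined with the characterization of real stability for homogeneous polynomials stated just before it. Since $\til{p}$ is homogeneous of degree three, it is real stable if and only if it is hyperbolic with respect to $\RR^{2n}_{>0}$. By \cref{cor:hyp-wrt-simplicial}, $p$ is hyperbolic with respect to $e_0$ if and only if $p$ is hyperbolic with respect to $K=\cone\{e_0\pm\epsilon e_i : i=1,\dots,n\}$. So the task reduces to proving the equivalence ``$p$ hyperbolic w.r.t.\ $K$'' $\iff$ ``$\til{p}=p\circ M$ hyperbolic w.r.t.\ $\RR^{2n}_{>0}$.''

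Two elementary facts about $M$ drive the argument. First, since the columns of $M$ are exactly the generators of $K$, the image $M(\RR^{2n}_{>0})$ consists of strictly positive combinations of these generators and thus lies in the relative interior of $K$. Second, the columns span $\RR^{n+1}$ because $(e_0+\epsilon e_i)-(e_0-\epsilon e_i)=2\epsilon e_i$ and $(e_0+\epsilon e_i)+(e_0-\epsilon e_i)=2e_0$ recover a basis; hence $M:\RR^{2n}\to\RR^{n+1}$ is surjective. Finally, we note that $p>0$ on the interior of $K$, which is already contained in the proof of \cref{prop:hyp-wrt-Lorentz} via $K\subseteq L_\epsilon$.

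For the forward direction, fix $y\in\RR^{2n}_{>0}$ and $x\in\RR^{2n}$; then $\til{p}(ty+x)=p(tMy+Mx)$, and since $My$ lies in the interior of $K$ where $p$ is hyperbolic, this is real rooted in $t$, while $\til{p}(y)=p(My)>0$. For the reverse direction, the simplest test point is $y_0=(1,\dots,1)$, for which $My_0=2n\cdot e_0$. If $\til{p}$ is real stable, then for every $x\in\RR^{2n}$ the polynomial $p(2nt\,e_0+Mx)$ is real rooted in $t$; by surjectivity of $M$, this says $p(se_0+w)$ is real rooted in $s$ for every $w\in\RR^{n+1}$, and together with $p(e_0)=1>0$ this yields hyperbolicity of $p$ with respect to $e_0$. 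The only subtle point worth flagging is that $K$ is \emph{not} simplicial once $n\geq 2$ (it has $2n>n+1$ generators), so $M$ has a nontrivial kernel; nevertheless, surjectivity of $M$ and the fact that $My_0$ is a positive multiple of $e_0$ are all that is needed for the reverse direction, which is what makes this change of variables succeed.
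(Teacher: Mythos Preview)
Your proposal is correct and follows essentially the same route as the paper: both directions hinge on the identities $M\ones = 2n e_0$ and $M(\RR^{2n}_{>0})\subseteq K$, together with surjectivity of $M$, exactly as the paper argues. Your write-up is in fact slightly more careful, since you explicitly verify $\til{p}(y)=p(My)>0$ for $y\in\RR^{2n}_{>0}$ (needed for the ``$p(e)>0$'' clause in the definition of hyperbolicity), a point the paper leaves implicit.
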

\begin{proof}
    First, suppose that $\til{p}$ is real stable. We wish to show that for every $z\in\RR^{n+1}$, $p(te_0 + z)$ is real rooted. We note that $M$ is rank $n+1$, so there exists some $x\in\RR^{2n}$ such that $z = Mx$. Moreover, $M\ones = 2ne_0$, where $\ones$ denotes the vector of all ones. Then,
    \[
        p(te_0+z) = p\left(\frac{t}{2n}M\ones + Mx\right) = \til{p}\left(\frac{t}{2n}\ones + x\right).
    \]
    Since $\til{p}$ is real stable and $\frac{1}{2n}\ones\in\RR^{2n}_{>0}$, then this polynomial is real rooted, so $p$ is hyperbolic with respect to $e_0$, as desired.

    Now suppose that $p$ is hyperbolic with respect to $e_0$. By \cref{cor:hyp-wrt-simplicial}, $p$ is hyperbolic with respect to $K = \cone\{e_0 \pm \epsilon e_i\}$. To show that $\til{p}$ is stable, we wish to show that for any $v\in\RR^{2n}_{> 0}$ and $x\in\RR^{2n}$, $\til{p}(tv + x)$ is real rooted. Indeed, by definition,
    \[\til{p}(tv+x) = p(t(Mv) + Mx).\]
    Since $v\in\RR^{2n}_{> 0}$, then $Mv \in K$, and so $p(t(Mv) + Mx)$ is real rooted. Therefore, $\til{p}$ is stable, as desired.
\end{proof}

We are now ready to show that testing stability of cubics is coNP-hard. We require the following result of Nesterov:
\begin{theorem}[{\cite[Theorem 4]{RePEc:cor:louvco:2003071}}, {\cite[Theorem 5.2]{saunderson2019certifying}}]\label{thm:cubic-poly-max-clique}
    Let $G=(V,E)$ be a simple graph and let $\omega(G)$ be the size of a largest clique in $G$. Define
    \[q_G(x,y) = \sum_{(i,j)\in E} x_ix_jy_{ij}.\]
    Then
    \[\max_{\lVert x\rVert^2 +\lVert y\rVert^2 = 1} q_G(x,y) = \sqrt{\frac{2}{27}}\sqrt{1-\frac{1}{\omega(G)}}.\]
\end{theorem}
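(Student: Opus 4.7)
The plan is to reduce the maximization to the classical Motzkin--Straus theorem by optimizing first over the $y$ variables and then relating the resulting optimization over $x$ to the maximum-clique problem.

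First I would fix $x \in \RR^n$ and maximize over $y$ subject to the constraint $\lVert y\rVert^2 = 1 - \lVert x\rVert^2$. Since $q_G(x,y)$ is linear in $y$ with coefficient vector $c(x) = (x_ix_j)_{(i,j)\in E}$, Cauchy--Schwarz gives
\[
  \max_{\lVert y\rVert^2 = 1-\lVert x\rVert^2} q_G(x,y) = \sqrt{1-\lVert x\rVert^2}\,\sqrt{\sum_{(i,j)\in E} x_i^2 x_j^2},
\]
with equality attained by $y = c(x)/\lVert c(x)\rVert \cdot \sqrt{1-\lVert x\rVert^2}$.

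Next I would decouple the radial and angular parts of the remaining optimization over $x$. Writing $a = \lVert x\rVert$ and using homogeneity,
\[
  \max_{\lVert x\rVert^2 + \lVert y\rVert^2 = 1} q_G(x,y) = \Bigl(\max_{0\le a\le 1} a^2\sqrt{1-a^2}\Bigr)\Bigl(\max_{\lVert u\rVert = 1} \sqrt{\textstyle\sum_{(i,j)\in E} u_i^2 u_j^2}\Bigr).
\]
The univariate maximum is attained at $a^2 = 2/3$, giving $\max_a a^2\sqrt{1-a^2} = \tfrac{2}{3\sqrt{3}}$, a routine calculus exercise.

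The main step, and the only nontrivial one, is evaluating the angular maximum. Here I would substitute $z_i = u_i^2$, so that $\sum z_i = \lVert u\rVert^2 = 1$ and $z$ ranges over the standard simplex $\Delta^{n-1}$, transforming the problem into
\[
  \max_{u \in S^{n-1}} \sum_{(i,j)\in E} u_i^2 u_j^2 = \max_{z \in \Delta^{n-1}} \sum_{(i,j)\in E} z_iz_j.
\]
By the Motzkin--Straus theorem, the right-hand side equals $\tfrac{1}{2}\bigl(1 - \tfrac{1}{\omega(G)}\bigr)$. Combining this with the previous step yields
\[
  \max_{\lVert x\rVert^2+\lVert y\rVert^2 = 1} q_G(x,y) = \tfrac{2}{3\sqrt{3}}\cdot\sqrt{\tfrac{1}{2}\bigl(1-\tfrac{1}{\omega(G)}\bigr)} = \sqrt{\tfrac{2}{27}}\sqrt{1-\tfrac{1}{\omega(G)}},
\]
as claimed. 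The only real obstacle is the Motzkin--Straus identification; everything else is direct optimization via Cauchy--Schwarz, homogeneity, and one-variable calculus.
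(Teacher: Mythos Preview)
Your argument is correct. The paper does not supply its own proof of this statement; it is quoted verbatim as a known result, with attribution to Nesterov and to Saunderson, and used as a black box in the reduction. So there is nothing to compare against on the paper's side.

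For what it is worth, your route---optimize out $y$ by Cauchy--Schwarz, separate the radial scalar $a=\lVert x\rVert$ from the direction $u=x/\lVert x\rVert$, and then invoke Motzkin--Straus after the substitution $z_i=u_i^2$---is exactly the standard derivation underlying the cited references. One small remark: when you write ``equality attained by $y=c(x)/\lVert c(x)\rVert\cdot\sqrt{1-\lVert x\rVert^2}$'' you implicitly assume $c(x)\neq 0$; this is harmless since if $c(x)=0$ the inner maximum is zero anyway, but it is worth a word. Likewise, the separation into a product $\bigl(\max_a a^2\sqrt{1-a^2}\bigr)\bigl(\max_{\lVert u\rVert=1}\sqrt{\sum u_i^2u_j^2}\bigr)$ is legitimate precisely because both factors are nonnegative and the constraint set is a product, which you might state explicitly.
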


Since we want the cubic we are testing to have rational coefficients, we first need a quick technical lemma.
\begin{lemma}\label{lem:le-k-iff-le-ratl}
    Let $G$ be a graph on $n$ vertices, and let $\omega(G)$ denote its clique number. Let
    \[
        q_G(x,y) = \sum_{ij \in E} x_ix_jy_{ij}.
    \]
    For any integer $k \leq n$, there exists a rational number $\ell(k)$, computable in polynomial time in $n$, such that $\omega(G) \leq k$ if and only if
    \[
        \max_{\lVert x\rVert^2 + \lVert y\rVert^2 = 1} q_G(x,y) \leq \ell(k).
    \]
\end{lemma}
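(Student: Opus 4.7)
The plan is to invoke Nesterov's theorem (\cref{thm:cubic-poly-max-clique}) to reduce the lemma to producing a rational number in an explicit interval, then show that such a rational has polynomial bit length. Set
\[
    f(m) := \sqrt{\tfrac{2}{27}\bigl(1 - \tfrac{1}{m}\bigr)},
\]
so that the maximum in question equals $f(\omega(G))$, and note that $f$ is strictly increasing on $\{1,\dots,n\}$. Therefore $\omega(G) \leq k$ is equivalent to $f(\omega(G)) < f(k+1)$ (with the convention $f(n+1) := +\infty$), and so any rational $\ell(k) \in [f(k), f(k+1))$ has the required property: $\max \leq \ell(k)$ iff $f(\omega(G)) \leq \ell(k) < f(k+1)$ iff $\omega(G) \leq k$.

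To see that such a rational can be chosen with bit length polynomial in $n$, I would bound the gap $f(k+1) - f(k)$ from below. A short computation yields $f(k+1)^2 - f(k)^2 = \tfrac{2}{27k(k+1)}$, and since $f(k+1) + f(k) \leq 2\sqrt{2/27}$, dividing gives $f(k+1) - f(k) = \Omega(1/n^2)$. Thus some rational of bit length $O(\log n)$ lies inside $[f(k), f(k+1))$.

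To construct such an $\ell(k)$ explicitly, I would run a binary search for a rational $\ell(k) \geq f(k)$ with $\ell(k)^2 - f(k)^2$ less than half the gap. Because $f(k)^2 = \tfrac{2(k-1)}{27k}$ is itself a known rational, the predicate $\ell^2 \geq f(k)^2$ can be tested exactly with rational arithmetic, and $O(\log n)$ refinement steps suffice to produce $\ell(k)$ with $\ell(k)^2 \in [f(k)^2, f(k+1)^2)$, i.e.\ $\ell(k) \in [f(k), f(k+1))$. The edge cases $k = n$ (any rational $\ell(n) \geq \sqrt{2/27}$, e.g., $\ell(n)=1$) and $k = 0$ (any negative rational) are immediate.

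I do not anticipate a real obstacle: the argument has two essentially independent ingredients, the monotonicity of $f$ and the $\Omega(1/n^2)$ gap estimate, each of which is a short direct calculation, while the actual production of $\ell(k)$ is standard rational approximation of a square root of a known rational.
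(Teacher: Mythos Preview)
Your proposal is correct and follows essentially the same approach as the paper: both use Nesterov's theorem to identify the maximum as $f(\omega(G))$, observe that any rational in $[f(k), f(k+1))$ works, and bound the gap $f(k+1)-f(k)$ from below by $\Omega(1/n^2)$. The only cosmetic differences are that the paper bounds the gap via the derivative $a'(x) > \tfrac{1}{8n^2}$ rather than via the difference of squares, and writes down the explicit choice $\ell(k) = \lceil 8n^2 f(k)\rceil / (8n^2)$ instead of invoking a binary search.
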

\begin{proof}
    For ease of notation, let $a(k) = \sqrt{\frac{2}{27}}\sqrt{1-\frac{1}{k}}$. By \cref{thm:cubic-poly-max-clique}, we know that $\max_{\lVert(x,y)\rVert^2 = 1} q_G(x,y) = a(\omega(G))$. Moreover, $a(k)$ is an increasing function, so $\max_{\lVert(x,y)\rVert^2=1} q_G(x,y) \leq a(k)$ if and only if $\omega(G) \leq k$.

    Define
    \[
        \ell(k) = \frac{\left\lceil 8n^2 a(k)\right\rceil}{8n^2}.
    \]
    We note that $\ell(k)$ is rational. Moreover, since $a(k) < \sqrt{\frac{2}{27}}$ for all $k \geq 1$, its numerator and denominator are both polynomial in $n$, so $\ell(k)$ can be computed in polynomial time.

    Since $\ell(k) \geq a(k)$, one direction is immediate. Namely, if $\omega(G) \leq k$, then
    \[
        \max_{\lVert x\rVert^2 + \lVert y\rVert^2 = 1}q_G(x,y) \leq a(k) \leq \ell(k).
    \]
    
    For the converse, suppose that $\omega(G) > k$. Since $\omega(G)$ is always an integer, this implies that $\omega(G) \geq k+1$, and so $\max_{\lVert (x,y)\rVert^2 =1} q_G(x,y) \geq a(k+1)$.
    
    In order to complete the proof, it remains to show that $\ell(k) < a(k+1)$. Indeed, for any $1 < x \leq n$,
    \[
        a'(x) = \sqrt{\frac{2}{27}}\frac{1}{2\sqrt{x^3(x-1)}} > \frac{1}{\sqrt{54}}\frac{1}{x^2} \geq \frac{1}{\sqrt{54}}\frac{1}{n^2} > \frac{1}{8n^2},
    \]
    so
    \[
        a(k+1) = a(k) + \int_{k}^{k+1}a'(x)dx \geq a(k) + \frac{1}{8n^2}.
    \]
    Since $\lceil 8n^2 a(k)\rceil < 8n^2 a(k) + 1$, it follows that $\ell(k) < a(k) + \frac{1}{8n^2} \leq a(k+1)$.
    
    Thus, if $\omega(G) > k$, then $\max_{\lVert(x,y)\rVert^2=1} q_G(x,y) > \ell(k)$, and so $\omega(G) \leq k$ if and only if $\max_{\lVert(x,y)\rVert^2=1}q_G(x,y) \leq \ell(k)$, as desired.
\end{proof}

\begin{theorem}\label{thm:cubic-stable-hard}
    Given a homogeneous cubic polynomial $p$ with rational coefficients, it is coNP-hard to decide if $p$ is stable.
\end{theorem}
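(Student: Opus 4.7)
The plan is to reduce from the (coNP-hard) problem of deciding, given a graph $G$ and integer $k$, whether the clique number $\omega(G) \leq k$. All of the pieces needed are already in place in the excerpt; the proof just has to assemble them and verify that the construction stays rational and of polynomial bit size.

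First I would package the scaled Nesterov cubic. Given $(G,k)$ with $G$ on $n$ vertices, compute the rational threshold $\ell(k)$ from \cref{lem:le-k-iff-le-ratl} (this is polynomial time by that lemma). Set
\[
    q(x,y) = \frac{1}{\ell(k)} q_G(x,y) = \frac{1}{\ell(k)} \sum_{(i,j)\in E} x_i x_j y_{ij}.
\]
Since $q_G$ is homogeneous of odd degree, $\max_{\|(x,y)\|=1} |q_G| = \max_{\|(x,y)\|=1} q_G$, so by \cref{lem:le-k-iff-le-ratl}, $\max_{\|(x,y)\|^2=1}|q(x,y)| \leq 1$ iff $\omega(G) \leq k$. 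Now form the homogeneous cubic in $1 + n + |E|$ variables
\[
    p(x_0, x, y) = x_0^3 - 3 x_0 (\|x\|^2 + \|y\|^2) + 2 q(x,y),
\]
which has rational coefficients of polynomial bit size. By \cref{thm:hyp-iff-max-le-1}, $p$ is hyperbolic with respect to $e_0$ if and only if $\omega(G) \leq k$.

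Next I would invoke \cref{thm:hyp-iff-stable} to turn the hyperbolicity question about $p$ into a stability question. Let $N$ be the largest absolute value of a coefficient of $q$ (i.e. $N = 1/\ell(k)$) and $m = n + |E|$ be the number of nontrivial variables. Choose any rational $\epsilon$ with $0 < \epsilon < \min(\tfrac{1}{2Nm^3}, \tfrac{1}{2})$ of polynomial bit size, e.g.\ $\epsilon = 1/\lceil 2Nm^3 + 2\rceil$. Let $M$ be the $(1+m) \times 2m$ rational matrix with columns $e_0 \pm \epsilon e_i$, and define
\[
    \widetilde{p}(z_1,\dots,z_{2m}) = p(Mz).
\]
This is a homogeneous cubic in $2m$ variables with rational coefficients, computable from $(G,k)$ in polynomial time. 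By \cref{thm:hyp-iff-stable}, $\widetilde{p}$ is real stable if and only if $p$ is hyperbolic with respect to $e_0$, which by the previous paragraph holds if and only if $\omega(G) \leq k$.

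Chaining these equivalences yields a polynomial-time many-one reduction from the coNP-hard problem ``$\omega(G) \leq k$?'' to the question ``is $\widetilde{p}$ real stable?'', establishing coNP-hardness of the latter. The only nontrivial bookkeeping, and the step I would be most careful about, is rationality and bit complexity: I need to check that $\ell(k)$, the chosen $\epsilon$, and hence all entries of $M$ and all coefficients of $\widetilde{p}$ are rationals whose numerators and denominators are polynomial in the input size. This is routine given \cref{lem:le-k-iff-le-ratl} and the explicit polynomial bound on $N$ and $m$, but it is where the proof has to be explicit rather than just citing the preceding results.
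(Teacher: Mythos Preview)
Your proposal is correct and follows essentially the same route as the paper: construct $p(x_0,x,y)=x_0^3-3x_0(\|x\|^2+\|y\|^2)+2q_G(x,y)/\ell(k)$, use \cref{thm:hyp-iff-max-le-1} together with \cref{lem:le-k-iff-le-ratl} to tie hyperbolicity at $e_0$ to $\omega(G)\le k$, and then apply \cref{thm:hyp-iff-stable} with a rational $\epsilon$ to pass to stability of $\widetilde{p}$. If anything you are slightly more careful than the paper in tracking that the relevant variable count for choosing $\epsilon$ is $m=n+|E|$ and in naming an explicit polynomial-size rational $\epsilon$.
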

\begin{proof}
    Given a graph $G$ and integer $k$, it is coNP-hard to decide if $\omega(G) \leq k$ \cite{Karp1972}. Given $G$ and $k$, define $\ell = \ell(k)$ as in \cref{lem:le-k-iff-le-ratl}, and consider the cubic polynomial
    \[
        p(x_0,x,y) = x_0^3 - 3x_0\left(\lVert x\rVert^2 + \lVert y\rVert^2\right) + 2\frac{q_G(x,y)}{\ell}.
    \]
    By \cref{thm:hyp-iff-max-le-1}, this polynomial is hyperbolic with respect to $e_0$ if and only if
    \begin{align*}
        \max_{\lVert (x,y)\rVert^2 = 1}\left\lvert\frac{q_G(x,y)}{\ell}\right\rvert \leq 1 &\Leftrightarrow \max_{\lVert (x,y)\rVert^2 = 1} q_G(x,y) \leq \ell\\
            &\Leftrightarrow \omega(G) \leq k,
    \end{align*}
    so it is coNP-hard to decide if this polynomial (with rational coefficients) is hyperbolic with respect to $e_0$ and therefore whether $\omega(G) \leq k$. By \cref{thm:hyp-iff-stable}, deciding whether $p$ is hyperbolic with respect to $e_0$ is equivalent to checking stability of $\til{p} = p(Mx)$.

    By choosing $\epsilon$ to be rational, then $\til{p}$ will have rational coefficients, so if we could check stability of homogeneous cubics with rational coefficients, we could also check hyperbolicity of $p$ with respect to $e_0$.

    Since $N$ is the largest norm of a coefficient of $q(x)$, it is already encoded in the input, so computing a positive rational number $\epsilon < \min(\frac{1}{2Nn^3}, \frac{1}{2})$ takes polynomial time, as does performing the necessary change of variables by $M$.

    Therefore, we have a polynomial time reduction, and so deciding stability for a homogeneous cubic with rational coefficients is coNP-hard.
\end{proof}

\begin{example}
    Let $G$ be the path graph on three vertices, and let $k = 2$. We construct $q_G(x_1,x_2,x_3,y_{12},y_{23}) = x_1x_2y_{12} + x_2x_3y_{13}$. Then $a(k) = \sqrt{1/{27}}$, and $\ell(k) = \frac{\lceil 8(25)/\sqrt{27}\rceil}{8(25)} = \frac{39}{200}.$ Following the construction in \cref{thm:cubic-stable-hard} gives
    \[p(x_0,x_1,x_2,x_3,y_{12},y_{23}) = x_0^3 - 3x_0(x_1^2 + x_2^2 + x_3^2 + y_{12}^2 + y_{23}^2) + \frac{400}{39}(x_1x_2y_{12} + x_2x_3y_{23}).\]

    Thus, $N = \frac{400}{39}$, so we may take $\epsilon = 10^{-4} < \frac{1}{2\left(\frac{400}{39}\right) 5^3} = \frac{39}{50000}.$ Finally, we construct $\til{p} \in \RR[z_1,z_2,z_3,z_{12},z_{23},w_1,w_2,w_3,w_{12},w_{13}]$, where $z_i \mapsto x_0 + 10^{-4} x_i$, $z_{ij} \mapsto x_0 + 10^{-4}y_{ij}$, $w_i\mapsto x_0 - 10^{-4}x_i$, and $w_{ij}\mapsto x_0 - 10^{-4}y_{ij}$. Then,
    \begin{multline*}
        \til{p}(z,w) = \left(z_1 + w_1 + z_2 + w_2 + z_3 + w_3 + z_{12} + w_{12} + z_{23} + w_{23}\right)^3\\
        - 3\cdot 10^{-8}\left(z_1 + w_1 + z_2 + w_2 + z_3 + w_3 + z_{12} + w_{12} + z_{23} + w_{23}\right)\lVert z-w\rVert^2\\
        + \frac{400}{39}\cdot10^{-12}\left((z_1-w_1)(z_2-w_2)(z_{12}-w_{12}) + (z_2-w_2)(z_3-w_3)(z_{23}-w_{23})\right).
    \end{multline*}
    Since $\omega(G) \leq 2$, $\til{p}$ is real stable.
\end{example}

As an easy corollary to \cref{thm:cubic-stable-hard}, it is also coNP-hard to test for stability in higher degrees.
\begin{corollary}\label{cor:stability-conp-hard}
    Fix a degree $d \geq 3$. Given a homogeneous polynomial $p$ with rational coefficients of degree $d$, it is coNP-hard to decide if $p$ is stable.
\end{corollary}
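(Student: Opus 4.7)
The plan is to give a polynomial-time many-one reduction from the degree-$3$ case (\cref{thm:cubic-stable-hard}) to the degree-$d$ case, by padding with an extra variable raised to the $(d-3)$rd power. Concretely, given a homogeneous cubic $p \in \QQ[x_1,\dots,x_n]$, introduce a fresh variable $x_0$ and set
\[
    \tilde p(x_0,x_1,\dots,x_n) \;=\; x_0^{d-3}\, p(x_1,\dots,x_n) \;\in\; \QQ[x_0,x_1,\dots,x_n]_d.
\]
This transformation is clearly computable in polynomial time and preserves rationality of coefficients, so it suffices to show that $p$ is real stable if and only if $\tilde p$ is real stable.

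For the forward direction, I would use the standard fact that a product of stable polynomials is stable. Viewing $p$ as a polynomial in $x_0,x_1,\dots,x_n$ in which $x_0$ does not appear, its zero set is unchanged, so $p$ is stable as a polynomial in $n+1$ variables. The monomial $x_0^{d-3}$ is stable (its only zero is $x_0 = 0$, which has zero imaginary part). Hence the product $\tilde p = x_0^{d-3} p$ is stable.

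For the converse, suppose $p$ is not stable, so there exist $z_1,\dots,z_n \in \CC$ with $\Im z_j > 0$ for all $j$ and $p(z_1,\dots,z_n) = 0$. Taking $z_0 = i$, we have $\Im z_0 > 0$ and
\[
    \tilde p(z_0,z_1,\dots,z_n) \;=\; i^{d-3}\cdot p(z_1,\dots,z_n) \;=\; 0,
\]
so $\tilde p$ is not stable. Combining both directions yields the equivalence, and chaining with \cref{thm:cubic-stable-hard} finishes the reduction.

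There is essentially no hard step here; the only thing to be slightly careful about is invoking the product-of-stables fact in the $(n+1)$-variable setting and checking that the padding variable $x_0$ does not create spurious zeros with positive imaginary parts, which the computation above handles cleanly.
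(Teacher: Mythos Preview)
Your proposal is correct and follows exactly the same approach as the paper: multiply the given cubic by a power of a fresh variable to raise the degree, and observe that stability is preserved in both directions. The paper in fact gives less detail than you do, simply asserting that $q = y^{d-3}p$ is stable if and only if $p$ is; your explicit verification of both implications is a welcome addition.
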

\begin{proof}
    Given a homogeneous cubic $p\in\RR[z_1,\dots,z_n]$, we introduce a new variable $y$ and consider the polynomial $q = y^{d-3}p\in\RR[y,z_1,\dots,z_n]$. Then $q$ is a homogeneous polynomial of degree $d$, and $q$ is stable if and only if $p$ is stable.

    Thus, if we can test stability in degree $d \geq 3$, we can also test stability of homogeneous cubics, so stability is coNP-hard for all $d \geq 3$.
\end{proof}

\section{Lorentzianity}\label{sec:complete-log concavity}
While all homogeneous stable polynomials are Lorentzian, the converse is not true. In fact, while deciding if a polynomial is real stable is coNP-hard, deciding complete log concavity for polynomials of fixed degree $d$ can be done in polynomial time in the number of variables $n$.

The key is the following result from \cite{anari2018logconcave3}. We say a polynomial $f\in\RR[z_1,\dots,z_n]$ is \emph{indecomposable} if it cannot be written as $f = f_1 + f_2$, where $f_1,f_2$ are nonzero polynomials in disjoint sets of variables.

\begin{theorem}[{\cite[Theorem 3.2]{anari2018logconcave3}}]\label{thm:clc-conditions}
    Let $f\in\RR[z_1,\dots,z_n]$ be a homogeneous polynomial of degree $d \geq 2$ with nonnegative coefficients. Then $f$ is completely log concave if and only if the following two conditions hold:
    \begin{enumerate}[(i)]
        \item For all $\alpha\in\ZZ^n_{\geq 0}$ with $|\alpha|\leq d-2$, the polynomial $\partial^\alpha f$ is indecomposable.
        \item For all $\alpha\in\ZZ^n_{\geq 0}$ with $|\alpha| = d-2$, the quadratic polynomial $\partial^\alpha f$ is log concave over $\RR^n_{\geq 0}$.
    \end{enumerate}
\end{theorem}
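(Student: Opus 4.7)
The plan is to reduce both directions to the Hessian characterization of log concavity from \cref{thm:log concave-hessian}. The forward direction, that complete log concavity implies (i) and (ii), should follow relatively directly from the definitions, while the backward direction will require a substantial Perron--Frobenius style argument on the Hessians of the quadratic derivatives.

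For the forward direction, condition (ii) is essentially immediate: taking $V \in \RR^{n\times(d-2)}_{\geq 0}$ to be a matrix whose columns are standard basis vectors $e_i$ repeated $\alpha_i$ times produces $D_V f$ equal to $\partial^\alpha f$ up to a positive scalar, so the definition of complete log concavity forces $\partial^\alpha f$ to be a log concave quadratic. For condition (i), suppose for contradiction that $\partial^\alpha f$ with $|\alpha| \leq d-2$ splits as $g_1 + g_2$ in disjoint variable groups $S$ and $T$, so each $g_i$ is homogeneous of degree $r = d - |\alpha| \geq 2$. Euler's identity gives $w_S^\top \nabla^2 g_1(w_S) w_S = r(r-1) g_1(w_S) > 0$ at any $w \in \RR^n_{>0}$, showing that each diagonal block of $\nabla^2 \partial^\alpha f$ has at least one positive eigenvalue, hence the full Hessian has at least two, contradicting log concavity of $\partial^\alpha f$ via \cref{thm:log concave-hessian}.

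For the backward direction, I would induct downward on $k$, showing that $D_V f$ is log concave on $\RR^n_{>0}$ for every $V \in \RR^{n\times k}_{\geq 0}$. The cases $k \geq d-1$ are trivial since $D_V f$ is linear or constant with nonnegative coefficients. The decisive case is $k = d-2$, where $D_V f$ is a quadratic with nonnegative coefficients and log concavity reduces, by \cref{thm:log concave-hessian}, to its constant Hessian having at most one positive eigenvalue. This Hessian is a nonnegative linear combination of the matrices $H_\alpha = \nabla^2 \partial^\alpha f$ for $|\alpha| = d-2$, each of which is nonnegative and symmetric, irreducible by condition (i) (indecomposability of a nonnegative quadratic corresponds exactly to irreducibility of its Hessian), and has at most one positive eigenvalue by condition (ii). The extension to $k < d-2$ would then proceed by verifying that each $\partial_i f$ inherits conditions (i) and (ii) at degree $d-1$ (straightforward shifting of the multi-index $\alpha$) and applying the inductive hypothesis.

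The technical heart of the argument, and what I expect to be the main obstacle, is the spectral combination step: showing that a nonnegative combination of irreducible nonnegative symmetric matrices, each with at most one positive eigenvalue, again has at most one positive eigenvalue. The natural approach is via Perron--Frobenius: each $H_\alpha$ has a simple positive Perron eigenvalue $\lambda_\alpha$ with a strictly positive eigenvector $u_\alpha$, and the one-positive-eigenvalue hypothesis gives $H_\alpha \preceq \lambda_\alpha u_\alpha u_\alpha^\top / \lVert u_\alpha\rVert^2$. Summing over $\alpha$ produces only a rank-at-most-$|\{\alpha\}|$ upper bound, so obtaining a rank-one bound on the combined Hessian requires exhibiting a common positive test vector, perhaps the gradient $\nabla D_V f$ evaluated at a chosen $w \in \RR^n_{>0}$, with respect to which the Perron data for different $\alpha$ align. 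Pinning down this alignment using only the combinatorial indecomposability hypothesis, rather than finer structural information about $f$, is where I anticipate the bulk of the work.
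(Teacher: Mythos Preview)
This theorem is not proved in the paper; it is quoted from \cite{anari2018logconcave3} and invoked as a black box in the proof of \cref{prop:clc-poly-time}. There is therefore no in-paper argument to compare your proposal against.

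On the proposal itself: the forward direction is correct as written. For the backward direction you have accurately located the crux---showing that the constant Hessian of an arbitrary degree-$(d-2)$ directional derivative $D_V f$ still has at most one positive eigenvalue---and you are right that the mechanism is a Perron--Frobenius style argument on irreducible nonnegative symmetric matrices. But the proposal explicitly stops short of carrying that step out. The inequality $H_\alpha \preceq \lambda_\alpha u_\alpha u_\alpha^\top / \lVert u_\alpha\rVert^2$ for each $\alpha$ only yields, after summing, a bound by a matrix of rank equal to the number of $\alpha$'s, not rank one; getting a rank-one bound on the sum is exactly the ``alignment'' problem you flag and do not resolve. So what you have is a correct roadmap with the decisive lemma left as a gap, rather than a proof.
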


This means that complete log concavity can be decided in polynomial time.
\begin{proposition}\label{prop:clc-poly-time}
    For fixed degree $d$, there is an algorithm, which runs in time $O(n^{d+1})$, that decides if a homogeneous polynomial $f\in\RR_{\geq 0}[z_1,\dots,z_n]_d$ is completely log concave on $\RR^n_{\geq 0}$.
\end{proposition}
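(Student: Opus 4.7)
The plan is to execute the characterization in \cref{thm:clc-conditions} directly. The algorithm enumerates all multi-indices $\alpha\in\ZZ^n_{\geq 0}$ with $|\alpha|\leq d-2$ (of which there are $\sum_{k=0}^{d-2}\binom{n+k-1}{k}=O(n^{d-2})$ for fixed $d$), forms the partial derivative $\partial^\alpha f$, checks indecomposability of $\partial^\alpha f$ for every such $\alpha$, and additionally checks log concavity of the quadratic $\partial^\alpha f$ whenever $|\alpha|=d-2$. The output is ``completely log concave'' iff every check succeeds; correctness is immediate from \cref{thm:clc-conditions}, since partial derivatives of $f$ again have nonnegative coefficients.

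For condition (i) of \cref{thm:clc-conditions}, I would test indecomposability of $g:=\partial^\alpha f$ by forming the variable co-occurrence graph $G_\alpha$: its vertices are $\{z_1,\dots,z_n\}$, with an edge $\{i,j\}$ whenever some monomial of $g$ is divisible by $z_i z_j$. A decomposition $g=g_1+g_2$ into polynomials in disjoint sets of variables corresponds exactly to a partition of the variables actually appearing in $g$ with no edge crossing the partition, so $g$ is indecomposable iff the vertices carrying a nonzero exponent in some monomial of $g$ all lie in a single connected component of $G_\alpha$. This is decided by BFS. For condition (ii), when $|\alpha|=d-2$ the polynomial $g=\partial^\alpha f$ is a quadratic form with nonnegative coefficients whose Hessian $H_\alpha$ is a constant symmetric matrix, and \cref{thm:log concave-hessian} (applied at any $w\in\RR^n_{>0}$, at which $g(w)>0$ as long as $g\not\equiv 0$) shows that $g$ is log concave on $\RR^n_{\geq 0}$ iff $H_\alpha$ has at most one positive eigenvalue. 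Counting positive eigenvalues of an $n\times n$ symmetric matrix can be done in $O(n^3)$ arithmetic operations via symmetric LDL factorisation (Sylvester's law of inertia).

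The main obstacle is the runtime bookkeeping. The key observation is that a monomial $z^\beta$ of $f$ contributes to $\partial^\alpha f$ only when $\alpha\leq\beta$ componentwise, and since $|\beta|=d$ there are at most $\prod_i(\beta_i+1)\leq 2^d=O(1)$ such $\alpha$. Summing over the $O(n^d)$ monomials of $f$, all of the derivatives $\partial^\alpha f$ can be produced, and all of the graphs $G_\alpha$ can be built and tested for connectivity, in total time $O(n^d)$. The remaining cost is the $O(n^{d-2})$ signature computations from condition (ii), each of cost $O(n^3)$, contributing a dominant $O(n^{d+1})$ term and giving the claimed runtime.
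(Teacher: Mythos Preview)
Your proposal is correct and follows essentially the same approach as the paper: enumerate the $O(n^{d-2})$ derivatives $\partial^\alpha f$, test indecomposability via connectivity of the variable co-occurrence graph, and test log concavity of the top-order quadratics via the signature of their constant Hessians, with the $O(n^{d+1})$ bottleneck coming from the $O(n^{d-2})$ signature computations at $O(n^3)$ each. Your extra observation that each monomial of $f$ contributes to at most $\prod_i(\beta_i+1)\leq 2^d=O(1)$ of the $\partial^\alpha f$ is a nice justification for the $O(n^d)$ cost of building all the derivatives and graphs, which the paper leaves implicit.
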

\begin{proof}
    Since $d$ is fixed, there are $O(n^{d-2})$ partial derivatives to check. In order to check if a polynomial is indecomposable, we form a graph with vertices $\{i \in [n] : \dl_i f \neq 0\}$ and add edge $(i,j)$ if and only if there is a monomial of $f$ that contains both $x_i$ and $x_j$. Then $f$ is indecomposable if and only if this graph is connected, and we can construct this graph and check if it is connected in time $O(n^2)$. Since there are $O(n^{d-2})$ derivatives to check, each of which takes $O(n^2)$ time, we can check the first condition of \cref{thm:clc-conditions} in time $O(n^d)$.
    
    Moreover, if $|\alpha| = d-2$, then $\partial^\alpha f$ is quadratic, so the entries of $\nabla^2 \partial^\alpha f(z)$ do not depend on $z$, and $\partial^\alpha f$ is log concave if and only if $\nabla^2 \partial^\alpha f$ has exactly one positive eigenvalue. We can compute the signs of the eigenvalues of an $n\times n$ matrix in $O(n^3)$ time using Gaussian elimination \cite{beightler1966lagrange}, so it follows that checking whether $\partial^\alpha f$ is log concave can be accomplished in time at most $O(n^3)$. There are $O(n^{d-2})$ derivatives to check, each of which takes time $O(n^3)$, so we can check the second condition in time $O(n^{d+1})$.

    Therefore, we can check both conditions of \cref{thm:clc-conditions} in time at most $O(n^{d+1})$, as desired.
\end{proof}

\section{Computational Complexity of Log Concavity}\label{sec:quartic-log-concavity}

In \cref{sec:complete-log concavity}, we showed that we can decide if a homogeneous polynomial is completely log concave in polynomial time in the number of variables. However, unlike complete log concavity, we cannot reduce checking if a polynomial is log concave to checking its quadratic derivatives, and in fact deciding if a polynomial is log concave is coNP-hard in degrees $d \geq 4$.

\begin{theorem}\label{thm:quartic-lc-hard}
    It is coNP-hard to decide if a homogeneous polynomial of degree four with nonnegative coefficients is log concave on $\RR^n_{\geq 0}$.
\end{theorem}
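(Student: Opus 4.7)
The strategy is to reduce from the coNP-complete problem of deciding whether $\omega(G) \le k$, mimicking the proof of \cref{thm:cubic-stable-hard}. Given a graph $G$ on $n$ vertices and an integer $k$, the plan is to construct in polynomial time a homogeneous quartic polynomial $f = f_{G,k}$ with nonnegative rational coefficients in $N = O(n^2)$ variables such that $f$ is log concave on $\RR^N_{\ge 0}$ if and only if $\omega(G) \le k$. Combined with Karp's theorem this yields the coNP-hardness statement.

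By \cref{thm:log concave-hessian}, log concavity of a homogeneous polynomial $f$ at a point $w$ with $f(w) > 0$ is equivalent to $\nabla^2 f(w)$ having exactly one positive eigenvalue, so log concavity on $\RR^N_{\ge 0}$ reduces to verifying this spectral condition at every point of the positive orthant where $f > 0$. The construction will be set up so that the signature of $\nabla^2 f$ at a particular family of directions in $\RR^N_{>0}$ is controlled by the quantity $\max_{\|x\|^2+\|y\|^2=1} q_G(x,y)$, where $q_G$ is Nesterov's cubic from \cref{thm:cubic-poly-max-clique}. The rational-threshold lemma \cref{lem:le-k-iff-le-ratl} will then convert the resulting inequality into a decision about $\omega(G) \le k$.

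A natural candidate is to introduce an auxiliary variable $t$ and consider a quartic of the shape
\[
    f(t,x,y) = \alpha\bigl(t + \textstyle\sum_i x_i + \sum_{ij \in E} y_{ij}\bigr)^4 + \beta \, t \cdot q_G(x,y),
\]
or a close variant (for instance with the $t^4$ and $t^2\|(x,y)\|^2$ contributions split off into separate summands), for rational constants $\alpha,\beta > 0$ depending in a polynomially computable way on $G$ and $k$. Both summands are sums of monomials with positive coefficients, and the first one is manifestly log concave. By tuning $\alpha$ and $\beta$ and evaluating the Hessian on the slice $t = 1$, $(x,y)$ on the unit sphere, I expect the failure of the one-positive-eigenvalue condition to correspond exactly to $\max_{\|x\|^2+\|y\|^2=1} q_G(x,y) > \ell(k)$. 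Feeding this through \cref{thm:cubic-poly-max-clique} and \cref{lem:le-k-iff-le-ratl} then completes the reduction.

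The main obstacle is the nonnegative-coefficient requirement. The cubic $x_0^3 - 3x_0\|x\|^2 + 2q(x)$ used in \cref{sec:stability} crucially has the negative term $-3x_0\|x\|^2$, so a naive lift such as $t \cdot p$ is not available. One must instead find a nonnegative quartic whose Hessian reproduces the Saunderson-type spectral behavior on the positive orthant rather than being trivially dominated by its obviously log-concave part; this is the heart of the argument. The remaining ingredients---the Hessian criterion, Nesterov's clique reduction, and the rational-threshold lemma---then apply almost verbatim from the cubic case, and a polynomial-time reduction from max clique to log concavity of nonnegative quartics follows.
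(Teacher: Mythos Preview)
Your proposal is not a proof; it is a plan that correctly locates the difficulty and then stops short of resolving it. You explicitly say that producing a nonnegative quartic whose Hessian has the right signature ``is the heart of the argument,'' but you do not carry it out, and the concrete candidate you float does not obviously work. For $f(t,x,y)=\alpha S^4+\beta\,t\,q_G(x,y)$ with $S=t+\sum_i x_i+\sum_{ij} y_{ij}$, the Hessian is $12\alpha S^2\ones\ones^T+\beta\,\nabla^2(t\,q_G)$. The hard direction is the ``if'': when $\omega(G)\le k$ you must show that this matrix has at most one positive eigenvalue \emph{at every point of the positive orthant}. But $\nabla^2(t\,q_G)$ there is a symmetric matrix with zero diagonal and nonnegative off-diagonal entries, and such matrices routinely have several positive eigenvalues; nothing about the clique-number bound forces it (or its restriction to $\ones^\perp$) to be negative semidefinite. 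Tuning $\alpha,\beta$ cannot repair this, since scaling a rank-one PSD term does not change the signature on its orthogonal complement. So the mechanism that is supposed to link $\max_{\|(x,y)\|=1} q_G$ to the Hessian signature is simply missing.

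The paper proceeds quite differently and avoids Nesterov's cubic entirely in this section. It reduces instead from \emph{convexity of quartics on the nonnegative orthant}, refining the Ahmadi--Olshevsky--Parrilo--Tsitsiklis construction: given $G,k$ one first builds, via the Ling et al.\ biquadratic $b_G$, a quartic $f$ that is convex on $\RR^{2n}_{\ge 0}$ iff $\omega(G)\le k$. The crucial step is then Lemma~\ref{lem:lc-iff-cvx-on-nonneg}: with $N$ at least the largest coefficient of $f$, set
\[
  g(x,z)=N\Bigl(z+\sum_i x_i\Bigr)^4 - f(x).
\]
The \emph{subtraction} is the whole point: $\nabla^2 g$ splits as a rank-one PSD term $12N(z+\sum x_i)^2\ones\ones^T$ plus the block $\operatorname{diag}(-\nabla^2 f(x),0)$, which is NSD exactly when $f$ is convex at $x$. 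This gives the clean ``one positive eigenvalue iff convex'' equivalence that your additive construction lacks, while the choice of $N$ guarantees nonnegative coefficients. If you want to repair your approach, this is the idea you are missing: encode the hard problem in a term that is \emph{subtracted} from a dominant $S^4$, so that its Hessian appears with a definite sign in the block decomposition.
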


To prove this, we first give a reduction from checking if a homogeneous quartic is convex on $\RR^n_{\geq 0}$. Then, we refine the main result from \cite{Ahmadi_2011} to show that it is coNP-hard to decide if a polynomial is convex on the nonnegative orthant.

\begin{lemma}\label{lem:lc-iff-cvx-on-nonneg}
    Let $f\in\RR[x_1,\dots,x_n]_4$ be a homogeneous quartic polynomial, and let $N > 0$ be at least as large as the largest coefficient of $f$. Let
    \[g(x_1,\dots,x_n,z) = N\left(z + \sum_{i=1}^n x_i\right)^4 - f(x_1,\dots x_n) \in \RR[x_1,\dots,x_n,z].\]
    Then $g$ is a homogeneous quartic with nonnegative coefficients, and $f$ is convex on $\RR^n_{\geq 0}$ if and only if $g$ is log concave.
\end{lemma}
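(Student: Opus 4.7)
The plan is to use the Hessian characterization of log concavity in \cref{thm:log concave-hessian}. The two easy parts of the statement are immediate: $g$ is homogeneous of degree $4$ by inspection, and expanding $N(z+\sum_i x_i)^4$ by the multinomial theorem produces every monomial of total degree $4$ with coefficient at least $N$, so subtracting $f$ only changes the $\alpha_0=0$ monomials to $N\binom{4}{\alpha} - f_\alpha \geq N - f_\alpha \geq 0$, using that $N$ dominates every coefficient of $f$. The main setup is to write $s = z + \sum_i x_i$ and compute directly
\[
    \nabla^2 g(x,z) = 12 N s^2 J - \tilde H(x),
\]
where $J \in \RR^{(n+1)\times(n+1)}$ is the all-ones matrix and $\tilde H(x)$ embeds $\nabla^2 f(x)$ as the leading $n \times n$ block with zero last row and column (the one corresponding to $z$). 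Since $g$ retains the monomial $N z^4$ from the expansion, $g > 0$ on $\RR^{n+1}_{>0}$, and \cref{thm:log concave-hessian} then says $g$ is log concave if and only if $\nabla^2 g(x,z)$ has exactly one positive eigenvalue at every $(x,z) \in \RR^{n+1}_{>0}$. Euler's identity already supplies one positive direction, since $(x,z)^\top \nabla^2 g(x,z)\, (x,z) = 12\, g(x,z) > 0$, so the real question is whether it is the only one.

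For the forward direction, suppose $f$ is convex on $\RR^n_{\geq 0}$, so $\tilde H(x) \succeq 0$ there. Then $\nabla^2 g(x,z)$ is a rank-one PSD matrix minus a PSD matrix. Any $2$-dimensional subspace of $\RR^{n+1}$ meets $\ker J$ in a nonzero vector $v$ (because $J$ has rank one), and $v^\top \nabla^2 g\, v = -v^\top \tilde H(x)\, v \leq 0$, which rules out any $2$-dimensional subspace of positivity. Hence $\nabla^2 g$ has at most one positive eigenvalue, which combined with the Euler calculation gives exactly one, so $g$ is log concave by \cref{thm:log concave-hessian}.

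The reverse direction is the main obstacle, since it requires squeezing pointwise PSDness of $\nabla^2 f(x)$ out of the mere signature condition on $\nabla^2 g$. My plan is to probe $\nabla^2 g$ on a carefully chosen $2$-dimensional plane. Fix $x \in \RR^n_{>0}$ and $u \in \RR^n$, set $v := (u, -\sum_i u_i) \in \ker J$, and choose $z > 0$ large enough that $g(x,z) > 0$ (possible since $N s^4$ dominates $f(x)$ as $z \to \infty$). The vectors $(x,z)$ and $v$ are linearly independent because $\ones^\top(x,z) > 0$ while $\ones^\top v = 0$. Using the Euler identity $\nabla^2 g(x,z)\, (x,z) = 3\nabla g(x,z)$ and $\ones^\top v = 0$ to cancel the $4Ns^3$ contributions, the $2 \times 2$ restriction of $\nabla^2 g$ to $\spanset\{(x,z), v\}$ has entries
\[
    (x,z)^\top \nabla^2 g\, (x,z) = 12 g(x,z),\qquad (x,z)^\top \nabla^2 g\, v = -3\nabla f(x)\cdot u,\qquad v^\top \nabla^2 g\, v = -u^\top \nabla^2 f(x)\, u.
\]
If $u^\top \nabla^2 f(x)\, u < 0$, both diagonal entries are positive, so the trace is positive; combined with the log-concavity constraint that forces at most one positive eigenvalue on this plane (by Cauchy interlacing), the determinant must be nonpositive, yielding $9(\nabla f(x)\cdot u)^2 + 12\, g(x,z)\, u^\top \nabla^2 f(x)\, u \geq 0$. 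Letting $z \to \infty$ drives $g(x,z) \to \infty$ while the other quantities stay fixed, so the left side tends to $-\infty$, contradicting the inequality. Thus $\nabla^2 f(x) \succeq 0$ on $\RR^n_{>0}$, and continuity of $\nabla^2 f$ extends this to $\RR^n_{\geq 0}$.
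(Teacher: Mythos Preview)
Your proof is correct. The nonnegativity of coefficients, the Hessian decomposition $\nabla^2 g = 12Ns^2\ones\ones^\top - \tilde H(x)$, and the forward implication all match the paper's argument (your forward direction is the same rank-one-plus-NSD observation, just phrased via intersecting $\ker J$).

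The reverse implication is where you diverge. The paper argues by contrapositive: if $f$ fails to be convex at some $x\in\RR^n_{\ge 0}$, pick an eigenvector $v$ of $-\nabla^2 f(x)$ with positive eigenvalue $\lambda$, set $Q=\nabla^2 g(x,1)$, and show directly that $Q\succ 0$ on $\spanset\{(v,0),(0,1)\}$ by computing $w^\top Q w = 12N(1+\sum_i x_i)^2(\ones^\top w)^2 + \alpha^2\lambda\lVert v\rVert^2$ for $w=(\alpha v,\beta)$ and checking both terms vanish only at $w=0$. No limits, no Euler identity, no interlacing---just a single explicit $2$-plane of positivity at a fixed point.

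Your route instead restricts $\nabla^2 g$ to $\spanset\{(x,z),v\}$ with $v\in\ker(\ones\ones^\top)$, uses Euler's identity to express the $2\times 2$ entries in terms of $g$, $\nabla f\cdot u$, and $u^\top\nabla^2 f(x)\,u$, deduces a determinant inequality from the signature constraint, and then pushes $z\to\infty$ to force $u^\top\nabla^2 f(x)\,u\ge 0$. This is sound (the ``Cauchy interlacing'' you invoke is really Sylvester's law of inertia applied to the compression $P^\top\nabla^2 g\,P$, but the conclusion is the same). The tradeoff: the paper's argument is shorter and more transparent because it chooses a subspace on which the rank-one term and the $-\tilde H$ term are visibly both nonnegative, avoiding any asymptotic reasoning; your argument, by working with $(x,z)$ itself and a kernel vector, gets clean formulas via Euler but then needs the limit $z\to\infty$ to kill the cross term $\nabla f(x)\cdot u$. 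Both buy the same conclusion, but the paper's choice of $2$-plane is the more economical one.
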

\begin{proof}
    First, we claim that $g$ has nonnegative coefficients. Indeed, every degree four monomial in $x_1,\dots,x_n$ appears in $N(z + x_1 + \dots +x_n)^4$ with coefficient at least $N$. Since we chose $N$ to be at least as large as the largest coefficient of $f$, then the coefficient of every monomial of $g$ is at least zero, so $g$ has nonnegative coefficients.

    Next, we claim that if $f$ is convex on $\RR^n_{\geq 0}$, then $g$ is log concave. We note that by construction,
    \begin{equation}
        \nabla^2 g(x,z) = 12N(z + x_1 + \dots + x_n)^2 \ones\ones^T + \left(\begin{array}{ccc|c}
             & & & 0 \\
             & -\nabla^2 f(x) & & \vdots\\
             & & & 0\\\hline
             0 & \dots & 0 & 0
        \end{array}\right),
    \end{equation}
    where $\ones$ denotes the vector of all ones. Since $g$ is homogeneous, $g$ is log concave at $(x,z)\in\RR^{n+1}_{\geq 0}$ if and only if $\nabla^2 g(x,z)$ has at most one positive eigenvalue \cite[Proposition 2.33]{brändén2022lorentzian}. If $f$ is convex on $\RR^n_{\geq 0}$, then $-\nabla^2 f(x)\preceq 0$, and since the first term is rank one, then $\nabla^2 g(x,z)$ has at most one positive eigenvalue, so $g$ is log concave, as desired.

    Now, suppose that $f$ is not convex on $\RR^n_{\geq 0}$. Then we claim that $g$ is not log concave. Indeed, suppose that $f$ is not convex at $x\in\RR^n_{\geq 0}$, and let $Q = \nabla^2 g(x,1)$. We claim that in order to show that $g$ is not log concave at $(x,1)\in\RR^{n+1}_{\geq 0}$, it suffices to show that $Q \succ 0$ on a two-dimensional subspace. Indeed, if $Q$ has at most one positive eigenvalue, then there is a subspace of codimension one on which $Q\preceq 0$. Specifically, if $v_1,\dots,v_n$ are eigenvectors of $Q$ with $\lambda_1,\dots,\lambda_n \leq 0$, then $Q\preceq 0$ on $\spanset\{v_1,\dots,v_n\}$. If $Q \succ 0$ on a two-dimensional subspace, then it cannot be negative semidefinite on a subspace of codimension one, so $Q$ must have at least two positive eigenvalues.

    Since $f$ is not convex at $x$, then $\nabla^2 f(x) \not\succeq 0$, so there exists some $v\in\RR^n\setminus\{0\}$ and $\lambda > 0$ such that $-\nabla^2 f(x) v = \lambda v$. We claim that $Q \succ 0$ on $L = \spanset\left\{\begin{pmatrix}
        \vec{v} \\ 0
    \end{pmatrix}, \begin{pmatrix}
        \vec{0} \\ 1
    \end{pmatrix}\right\} \subseteq \RR^{n+1}$.

    Let $w = \begin{pmatrix}
        \alpha v \\ \beta
    \end{pmatrix} \in L$. Then
    \begin{align*}
        w^T Q w &= 12N(1 + x_1 + \dots + x_n)^2 (\ones^T w)^2 + \alpha^2 v^T (-\nabla^2 f(x)) v\\
            &= 12 N(1+x_1+\dots+x_n)^2 (\ones^T w)^2 + \alpha^2 \lambda \lVert v\rVert^2.
    \end{align*}
    Since both terms are nonnegative, $w^T Q w \geq 0$, with equality if and only if $12 N(1+x_1+\dots+x_n)^2 (\ones^T w)^2 = 0$ and $\alpha^2 \lambda \lVert v\rVert^2 = 0$.
    
    Since $\lambda > 0$ and $v \neq 0$, the latter happens if and only if $\alpha = 0$. Thus, if $w^TQ w = 0$, then $\alpha = 0$, so $w = \begin{pmatrix}
        0 \\ \beta
    \end{pmatrix}$ and 
    \[0 = \begin{pmatrix}
        0 \\ \beta
    \end{pmatrix}^T Q \begin{pmatrix}
        0 \\ \beta
    \end{pmatrix} = 12 N (1 + x_1 + \dots + x_n)^2 \beta^2.\]
    
    Since $x\in\RR^n_{\geq 0}$, then $x_1+\dots+x_n+1 > 0$. Additionally, we chose $N > 0$, so this implies that $\beta = 0$. Thus, if $w\in L$, then $w^T Q w \geq 0$ with equality if and only if $w = 0$, so $Q \succ 0$ on $L$.

    Therefore, $\nabla^2 g(x,1)$ is positive definite on a two-dimensional subspace, so $\nabla^2 g(x,1)$ has at least two positive eigenvalues, so $g$ is not log concave at $(x,1)$.
    
    Thus, if $f$ is not convex on $\RR^n_{\geq 0}$, then $g$ is not log concave, as desired.
\end{proof}

Next, we show that it is coNP-hard to decide if a quartic polynomial is convex on $\RR^n_{\geq 0}$. We use the construction from \cite{Ahmadi_2011} and refine their argument to show that for a graph $G$ and integer $k$, we can construct a quartic polynomial $f$ so that $f$ is convex on the nonnegative orthant if and only if $G$ has no clique of size larger than $k$.

\begin{theorem}[{\cite{Ahmadi_2011}, Theorem 2.3}]\label{thm:ahmadi-convex-iff-psd}
    Given a biquadratic form $b(x;y) = \sum_{i,j=1}^n \alpha_{ij} x_ix_jy_iy_j$, define the $n\times n$ polynomial matrix $C(x,y)$ by setting $[C(x,y)]_{ij} = \frac{\partial^2 b(x;y)}{\partial x_i \partial y_j}$ and let $\gamma$ be the largest coefficient, in absolute value, of any monomial present in some entry of the matrix $C(x,y)$. Let $f$ be the form given by
    \begin{equation}\label{eqn:quartic-convex}
        f(x,y) = b(x;y) + \frac{n^2 \gamma}{2}\left(\sum_{i=1}^n x_i^4 + \sum_{i=1}^n y_i^4 + \sum_{\substack{i,j=1,\dots,n \\ i < j}} x_i^2 x_j^2 + \sum_{\substack{i,j=1,\dots,n \\ i < j}} y_i^2 y_j^2\right).
    \end{equation}
    Then $b(x;y)$ is PSD if and only if $f(x,y)$ is convex.
\end{theorem}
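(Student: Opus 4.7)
The plan is to prove both implications by analyzing the block structure of $\nabla^2 f(x,y)$ induced by the $x$-$y$ partition of variables; since $f$ is a homogeneous quartic, $f$ is convex if and only if $\nabla^2 f(x,y) \succeq 0$ for every $(x,y)\in\RR^{2n}$, so the problem reduces entirely to PSDness of this $2n\times 2n$ block matrix. For the direction \emph{$f$ convex $\Rightarrow b$ PSD}, I would evaluate the Hessian at boundary points of the form $(0,y)$. Computation using $b(x;y) = \sum_{i,j}\alpha_{ij}x_ix_jy_iy_j$ shows that each entry of the mixed-partial block $C(x,y)$ is linear in $x$ (so $C(0,y)=0$), that each entry of $\nabla_y^2 b$ is quadratic in $x$ (so it vanishes at $x=0$), and that $\nabla^2 r_x(0)=0$ because $r_x$ is homogeneous of degree four. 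Hence $\nabla^2 f(0,y)$ is block diagonal, and because $b(\cdot,y)$ is a homogeneous quadratic form in $x$,
\[
(u,0)^T \nabla^2 f(0,y)\,(u,0) \;=\; u^T\nabla_x^2 b(0,y)\,u \;=\; 2b(u,y).
\]
Convexity of $f$ then forces $b(u,y)\ge 0$ for all $u,y$, which is exactly PSDness of $b$.

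For the direction \emph{$b$ PSD $\Rightarrow f$ convex}, I would write
\[
\nabla^2 f(x,y) \;=\; \begin{pmatrix} A(x,y) & C(x,y) \\ C(x,y)^T & B(x,y) \end{pmatrix}
\]
with $A = \nabla_x^2 b + \nabla^2 r_x$ and $B = \nabla_y^2 b + \nabla^2 r_y$, where the perturbation $r$ contributes nothing to the mixed block $C$. The PSD hypothesis supplies $u^T \nabla_x^2 b(x,y)\,u = 2b(u,y)\ge 0$ and its $y$-analogue, so the $b$-part of each diagonal block is PSD. Rewriting
\[
\sum_i x_i^4 + \sum_{i<j} x_i^2 x_j^2 \;=\; \tfrac12\sum_i x_i^4 + \tfrac12\Bigl(\sum_i x_i^2\Bigr)^2
\]
exhibits $r_x$ as a sum of convex polynomials, and an explicit Hessian computation sharpens this to the matrix inequality $\nabla^2 r_x(x) \succeq n^2 \gamma\,\lVert x\rVert^2 I$, with the analogous bound for $r_y$. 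The off-diagonal block is controlled by the definition of $\gamma$: an entrywise bound together with Cauchy--Schwarz yields $\lVert C(x,y)\rVert_{\mathrm{op}}^2 = O(n\gamma^2\lVert x\rVert^2\lVert y\rVert^2)$. The Schur complement sufficient condition $\lVert C\rVert_{\mathrm{op}}^2 \le \lambda_{\min}(A)\lambda_{\min}(B)$ then closes and gives $\nabla^2 f(x,y) \succeq 0$; the degenerate cases $x=0$ or $y=0$ (where $C$ vanishes) are handled directly by the resulting block-diagonal structure.

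The main obstacle is the second direction, and within it verifying that the constant $n^2\gamma/2$ in the definition of $f$ is calibrated so that the convex padding $r$ dominates the off-diagonal block. The $\Omega(n^2\gamma\lVert x\rVert^2)$ lower bound on $\lambda_{\min}(A)$ from $\nabla^2 r_x$, together with its $y$-counterpart, must overwhelm the $O(n\gamma^2\lVert x\rVert^2\lVert y\rVert^2)$ bound on $\lVert C\rVert_{\mathrm{op}}^2$, and matching these quantitatively through a careful Hessian calculation and the Schur complement inequality is precisely why the specific exponent of $n$ appearing in front of $\gamma$ in the definition of $f$ is needed.
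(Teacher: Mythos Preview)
The paper does not supply its own proof of this theorem: it is quoted verbatim as \cite[Theorem~2.3]{Ahmadi_2011} and used as a black box in the reduction of \cref{sec:quartic-log-concavity}. So there is no in-paper proof to compare your proposal against.

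That said, your outline is sound and tracks the argument in the original source. The direction ``$f$ convex $\Rightarrow b$ PSD'' via evaluating the Hessian at $(0,y)$ is exactly the clean way to do it, and your observation that the mixed block $C$ and the block $\nabla_y^2 b$ both vanish at $x=0$ (while $\nabla^2 r_x(0)=0$ by homogeneity) isolates $u^T\nabla_x^2 b(0,y)\,u = 2b(u,y)$ correctly. For the converse, your Schur-type sufficient condition $\lVert C\rVert_{\mathrm{op}}^2 \le \lambda_{\min}(A)\lambda_{\min}(B)$ is valid (it follows from $a\lVert u\rVert^2 - 2\sqrt{ab}\,\lVert u\rVert\lVert v\rVert + b\lVert v\rVert^2 \ge 0$), and the decomposition $\sum_i x_i^4 + \sum_{i<j}x_i^2x_j^2 = \tfrac12\sum_i x_i^4 + \tfrac12\lVert x\rVert^4$ does give $\nabla^2 r_x \succeq n^2\gamma\,\lVert x\rVert^2 I$. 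For the specific biquadratic shape $b=\sum_{i,j}\alpha_{ij}x_ix_jy_iy_j$ stated here, the off-diagonal entries of $C$ carry a single monomial and the diagonal entries at most $n$, so a Frobenius bound gives $\lVert C\rVert_F^2 \le (n{+}1)\gamma^2\lVert x\rVert^2\lVert y\rVert^2$, which is comfortably dominated by $n^4\gamma^2\lVert x\rVert^2\lVert y\rVert^2$; this confirms your $O(n\gamma^2\lVert x\rVert^2\lVert y\rVert^2)$ claim and closes the inequality. The only places to tighten in a full write-up are making the monomial count in $C$ explicit (it depends on the restricted form of $b$) and handling the degenerate case $n=1$ separately.
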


In \cite{Ahmadi_2011}, given a graph $G$ and integer $k$, they use a result from \cite{ling-biquadratic} to construct a biquadratic form that is PSD if and only if $\omega(G) \leq k$, which implies that deciding convexity of quartics is coNP-hard.
\begin{lemma}[\cite{Ahmadi_2011,ling-biquadratic}]
    Let $G = ([n],E)$ be a graph and let $1 \leq k \leq n$. Define the biquadratic form
    \begin{equation}\label{eqn:graph-biquadratic}
        b_G(x;y) = -2k \sum_{ij\in E} x_ix_jy_iy_j - (1-k)\left(\sum_{i=1}^n x_i^2\right)\left(\sum_{i=1}^n y_i^2\right).
    \end{equation}
    Then $b_G(x;y)$ is PSD if and only if $\omega(G) \leq k$.
\end{lemma}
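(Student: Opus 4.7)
The plan is to reduce to the nonnegative orthant via a sign-flipping argument and then apply the classical Motzkin--Straus theorem. Observe first that $b_G(x;y) \geq 0$ for all $x,y \in \RR^n$ if and only if it holds for all $x,y \in \RR^n_{\geq 0}$. The forward direction is immediate. For the converse, note that the quartic term $\|x\|^2 \|y\|^2$ depends only on $|x_i|,|y_i|$, while replacing each $x_i,y_i$ with $|x_i|,|y_i|$ can only increase $\sum_{ij \in E} x_i x_j y_i y_j$ (each summand gets replaced by its absolute value). Hence $b_G$ achieves its minimum over the unit product sphere on the nonnegative orthant, and it suffices to prove PSD-ness there.

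Next, the plan is to parametrize. For $x,y \in \RR^n_{\geq 0}$ with $\|x\|_2 = \|y\|_2 = 1$, set $u_i = x_i y_i \geq 0$. Writing $A$ for the adjacency matrix of $G$, we have
\[
    \sum_{ij \in E} x_i x_j y_i y_j \;=\; \tfrac12\, u^T A u,
\]
and Cauchy--Schwarz gives $\sum_i u_i = \langle x,y\rangle \leq 1$. Conversely, every $u \in \RR^n_{\geq 0}$ with $\sum u_i = 1$ is realized by the choice $x_i = y_i = \sqrt{u_i}$, which satisfies $\|x\|_2 = \|y\|_2 = 1$. By homogeneity, scaling $u$ to have $\sum u_i = 1$ only increases $u^T A u$, so
\[
    \max_{\substack{x,y \geq 0 \\ \|x\|=\|y\|=1}} \sum_{ij \in E} x_ix_jy_iy_j
    \;=\; \max_{\substack{u \geq 0 \\ \sum u_i = 1}} \tfrac12\, u^T A u.
\]

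Now I would invoke the Motzkin--Straus theorem, which identifies the right-hand maximum as $\tfrac12(1 - 1/\omega(G))$. Finally, $b_G \geq 0$ on $\RR^n_{\geq 0} \times \RR^n_{\geq 0}$ is equivalent (after scaling to unit norm) to
\[
    k-1 \;\geq\; 2k \cdot \tfrac12\bigl(1 - \tfrac{1}{\omega(G)}\bigr)
    \;=\; k - \tfrac{k}{\omega(G)},
\]
which simplifies to $\omega(G) \leq k$, completing the equivalence.

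The only substantive ingredient is Motzkin--Straus itself, which is classical, so the main ``obstacle'' is really the bookkeeping of the reductions: justifying the passage to the nonnegative orthant and verifying that the Hadamard-product change of variables $u = x \circ y$ is a bijection (on the relevant feasible sets) up to the invariants appearing in the form. Both steps are transparent once the factorization $\sum_{ij \in E} x_i x_j y_i y_j = \tfrac12 u^T A u$ is noticed.
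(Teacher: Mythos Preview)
Your argument is correct. The paper does not supply its own proof of this lemma but simply cites it from \cite{Ahmadi_2011,ling-biquadratic}, and the route you take---reduce to the nonnegative orthant by a sign argument, substitute $u_i=x_iy_i$, and invoke Motzkin--Straus---is exactly the standard one underlying those references; indeed, the paper itself deploys the Motzkin--Straus optimizer $x_i^*=y_i^*=1/\sqrt{\omega(G)}$ on a maximum clique in the very next lemma.
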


We use the same biquadratic form and construction, and refine the result to show that it is coNP-hard to test if a quartic is convex on the nonnegative orthant, rather than on all of $\RR^n$.
\begin{lemma}\label{lem:cvx-on-nonneg-iff-clique}
    Let $G = ([n], E)$ be a graph, and let $1 \leq k \leq n$. Define the biquadratic form $b_G(x;y)$ as in \cref{eqn:graph-biquadratic},
    and construct $f$ as in \cref{thm:ahmadi-convex-iff-psd}. Then $f$ is convex on $\RR^{2n}_{\geq 0}$ if and only if $\omega(G) \leq k$.
\end{lemma}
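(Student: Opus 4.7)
The forward direction is immediate: if $\omega(G)\le k$, then by the lemma just quoted $b_G$ is PSD on all of $\RR^{2n}$, so \cref{thm:ahmadi-convex-iff-psd} gives that $f$ is convex on $\RR^{2n}$, and in particular on $\RR^{2n}_{\ge 0}$.

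For the converse I plan to refine the standard non-convexity witness so that it lies in $\RR^{2n}_{\ge 0}$. Suppose $\omega(G) > k$, and let $C\subseteq [n]$ be a clique of size $m\ge k+1$. Its indicator vector $\chi_C$ is already nonnegative, and a direct computation using that $C$ is a clique gives
\[ b_G(\chi_C;\chi_C) \;=\; (k-1)m^2 - 2k\binom{m}{2} \;=\; m(k-m) \;<\; 0. \]

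The key is a scaling mismatch in $f$: the biquadratic $b_G$ has bidegree $(2,2)$ in $(x,y)$, so its $xx$-block Hessian depends only on $y$, while the correction from \cref{eqn:quartic-convex} splits as $c_x(x) + c_y(y)$ with no mixed $xy$-monomials, so its $xx$-Hessian is quadratic in $x$ and scales like $\epsilon^2$ under $x\mapsto\epsilon x$. I will therefore evaluate $\nabla^2 f$ at the nonnegative point $(\epsilon\chi_C,\chi_C)$ and test in the direction $(\chi_C,0)$. Two applications of Euler's identity (once to the quadratic $b_G(\,\cdot\,;\chi_C)$ in $x$, once to the quartic $c_x$) then yield
\[ \chi_C^{\,T}\nabla^2_{xx}f(\epsilon\chi_C,\chi_C)\chi_C \;=\; 2\,b_G(\chi_C;\chi_C) + 12\,\epsilon^2\, c_x(\chi_C) \;=\; 2m(k-m) + O(\epsilon^2), \]
which is strictly negative for all sufficiently small $\epsilon > 0$. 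Hence $\nabla^2 f$ is not positive semidefinite at the nonnegative point $(\epsilon\chi_C,\chi_C)$, and since strict negativity of this quadratic form is an open condition, the same failure holds on a neighborhood meeting the open orthant $\RR^{2n}_{>0}$. So $f$ is not convex on $\RR^{2n}_{\ge 0}$.

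The main technical step is the quantitative choice of $\epsilon$: one has to bound $c_x(\chi_C)$ polynomially in $n,\gamma,m$ and verify that $12\epsilon^2 c_x(\chi_C) < 2m(m-k)$ for some explicit $\epsilon > 0$. Everything else --- the clique-based evaluation of $b_G(\chi_C;\chi_C)$, the two Euler identities, and the continuity perturbation into $\RR^{2n}_{>0}$ --- is short and direct.
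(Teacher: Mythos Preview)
Your proof is correct and rests on the same structural observation as the paper: the correction term in \cref{eqn:quartic-convex} splits as $c_x(x)+c_y(y)$, so in the $xx$-block of $\nabla^2 f$ only $c_x$ contributes, and its contribution is homogeneous of degree $2$ in $x$ while the $xx$-Hessian of $b_G$ depends only on $y$. The difference is in how this mismatch is exploited. You scale $x$ by $\epsilon$ and argue that the unwanted $12\epsilon^2 c_x(\chi_C)$ term can be beaten for small $\epsilon$; the paper simply takes $\epsilon=0$. Concretely (with the roles of $x$ and $y$ exchanged relative to yours), it evaluates $\nabla^2 f$ at the boundary point $(x^*,0)\in\RR^{2n}_{\ge 0}$ and tests in the direction $(0,y^*)$: at $y=0$ the $yy$-Hessian of $c_y$ vanishes exactly, so
\[
\begin{pmatrix}0\\y^*\end{pmatrix}^T \nabla^2 f(x^*,0)\begin{pmatrix}0\\y^*\end{pmatrix}
=(y^*)^T\nabla^2_{yy}b_G(x^*,\cdot)\,y^*=2\,b_G(x^*;y^*)<0.
\]
This avoids the quantitative bound on $c_x(\chi_C)$, the choice of $\epsilon$, and the continuity perturbation into the open orthant. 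Your own scaling remark (``scales like $\epsilon^2$'') already shows why $\epsilon=0$ works outright; pushing to a positive $\epsilon$ is unnecessary since the lemma concerns $\RR^{2n}_{\ge 0}$, not the open orthant.
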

\begin{proof}
    One direction is immediate from \cite{Ahmadi_2011}. Namely, if $\omega(G) \leq k$, then $b_G(x;y) \succeq 0$. Then by the result from \cite{Ahmadi_2011}, $f$ is convex, so in particular, it is convex on $\RR^{2n}_{\geq 0}$.

    Now, suppose that $\omega(G) > k$. Following \cite{ling-biquadratic} and \cite{Motzkin_Straus_1965}, let $\mathcal{C} \subseteq [n]$ be a clique of size $\omega(G)$ in $G$, and define $x^*, y^* \in \RR^n_{\geq 0}$ by
    \[
        x_i^* = y_i^* = \begin{cases}
            \frac{1}{\sqrt{\omega(G)}}, & i \in \mathcal{C}\\
            0, & \text{otherwise}
        \end{cases}.
    \]
    We claim that $f$ is not convex at $(x^*, 0) \in \RR^{2n}_{\geq 0}$.
    
    We note that
    \[
        b_G(x^*; y^*) = -2k \binom{\omega(G)}{2} \frac{1}{\omega(G)^2} - (1-k) = \frac{k}{\omega(G)} - 1 < 0.
    \]
    
    Moreover, we note that for any $x\in\RR^n$, $\frac{\dl^2}{\dl y_i\dl y_j} f(x, 0) = \frac{\dl^2}{\dl y_i\dl y_j}b_G(x,0)$, and $\frac{\dl^2}{\dl y_i\dl y_j}b_G(x,y)$ does not depend on $y$, so $\frac{\dl^2}{\dl y_i\dl y_j}b_G(x,0) = \frac{\dl^2}{\dl y_i\dl y_j} b_G(x,y^*)$. Thus, since $b_G$ is homogeneous of degree $2$ in the $y$ variables,
    \[\begin{pmatrix}
        0 \\ y^*
    \end{pmatrix}^T \nabla^2 f(x^*, 0) \begin{pmatrix}
        0 \\ y^*
    \end{pmatrix} = \sum_{i,j=1}^n y_i^*y_j^* \frac{\dl^2}{\dl y_i \dl y_j}b_G(x^*, y^*) = 2b_G(x^*, y^*) < 0.\]
    Therefore, $\nabla^2 f(x^*, 0) \not\succeq 0$, and so $f$ is not convex on $\RR^{2n}_{\geq 0}$, as desired.
\end{proof}

Finally, we combine \cref{lem:lc-iff-cvx-on-nonneg} and \cref{lem:cvx-on-nonneg-iff-clique} to show that deciding log concavity is coNP-hard.
\begin{proof}[Proof of \cref{thm:quartic-lc-hard}]
    Let $G = ([n],E)$ be a graph on $n$ vertices, and let $1 \leq k \leq n$. Construct $b_G(x;y)$ as in \cref{eqn:graph-biquadratic} and $f$ as in \cref{eqn:quartic-convex}. Then we have a quartic polynomial in $2n$ variables, and by \cref{lem:cvx-on-nonneg-iff-clique}, $f$ is convex on $\RR^{2n}_{\geq 0}$ if and only if $\omega(G) \leq k$. Moreover, constructing $f$ takes polynomial time in $n$ \cite{Ahmadi_2011}.

    Then, using \cref{lem:lc-iff-cvx-on-nonneg}, we get a homogeneous quartic polynomial $g$ in $2n+1$ variables with nonnegative coefficients so that $g$ is log concave if and only if $f$ convex on $\RR^{2n}_{\geq 0}$. Thus, $g$ is log concave if and only if $\omega(G) \leq k$, so it just remains to show that constructing $g$ from $f$ takes polynomial time.

    Indeed, to get from $f$ to $g$, we add a polynomial with $\binom{n+4}{4}$ monomials, each with coefficient at most $24N$. The size of $N$ is polynomial in our input, since we may choose it to be the largest coefficient of $f$ and the coefficients of $f$ are part of our input.

    Thus, our construction runs in polynomial time, so it is coNP-hard to decide if a homogeneous quartic polynomial is log concave.
\end{proof}

\begin{corollary}\label{cor:lc-hard-higher-deg}
    For fixed degree $d \geq 4$, it is coNP-hard to decide if a homogeneous polynomial of degree $d$ with nonnegative coefficients is log concave.
\end{corollary}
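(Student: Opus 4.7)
The plan is to reduce from the quartic case (known coNP-hard by \cref{thm:quartic-lc-hard}) to the degree $d$ case by padding with a single extra variable. Given a homogeneous quartic $g(x_1,\dots,x_n) \in \RR_{\geq 0}[x_1,\dots,x_n]_4$, introduce a fresh variable $y$ and define
\[
    h(y, x_1,\dots, x_n) = y^{d-4}\, g(x_1,\dots,x_n) \in \RR_{\geq 0}[y, x_1,\dots,x_n]_d.
\]
This construction is clearly polynomial time, so it suffices to show that $h$ is log concave on $\RR^{n+1}_{\geq 0}$ if and only if $g$ is log concave on $\RR^{n}_{\geq 0}$.

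The key observation is that wherever $g(x) > 0$ and $y > 0$, we can write
\[
    \log h(y, x) = (d-4) \log y + \log g(x),
\]
so the Hessian has the block-diagonal form
\[
    \nabla^2 \log h(y, x) = \begin{pmatrix} -(d-4)/y^2 & 0 \\ 0 & \nabla^2 \log g(x) \end{pmatrix}.
\]
Since $d \geq 4$, the scalar upper-left block is always $\leq 0$, so $\nabla^2 \log h(y,x) \preceq 0$ exactly when $\nabla^2 \log g(x) \preceq 0$. Equivalently, using the characterization in \cref{def:log concavity}, log concavity of $h$ at $(y,x) \in \RR^{n+1}_{>0}$ is equivalent to log concavity of $g$ at $x \in \RR^n_{>0}$, and so $h$ is log concave if and only if $g$ is. Combining this equivalence with \cref{thm:quartic-lc-hard} gives the desired coNP-hardness for all fixed $d \geq 4$.

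I do not expect any substantive obstacles: the only mildly delicate point is handling points where $g(x) = 0$ (so the log is undefined), but \cref{def:log concavity} only requires the Hessian condition at points where it is defined, so this causes no issue. The reduction is essentially parallel to the padding trick used in \cref{cor:stability-conp-hard}, with the added observation that $\log y$ has a negative semidefinite Hessian so padding by $y^{d-4}$ preserves log concavity in both directions.
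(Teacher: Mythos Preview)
Your proposal is correct and is essentially identical to the paper's own proof: both multiply the quartic by a power $y^{d-4}$ of a fresh variable, observe that $\log$ splits additively, and read off the block-diagonal Hessian to conclude that log concavity is preserved in both directions. The only differences are notational (variable names) and your brief remark about points where $g$ vanishes, which the paper leaves implicit.
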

\begin{proof}
    We give a reduction from the problem of deciding log concavity of homogeneous quartic polynomials. Let $f \in \RR_{\geq 0}[x_1,\dots,x_n]$ be a homogeneous quartic, and define $g = z^{d-4}f(x)\in\RR_{\geq 0}[x_1,\dots,x_n,z]$. We note that $g$ is a homogeneous polynomial of degree $d$ with nonnegative coefficients, and we claim that $g$ is log concave if and only if $f$ is log concave.
    
    Indeed, we note that $\log g = (d-4)\log(z) + \log f$, so
    \[
        \nabla^2 \log g(x,z) = (d-4)\left(\begin{array}{ccc|c}
             0 & \dots & 0 & 0 \\
             \vdots & \ddots & \vdots & \vdots\\
             0 & \dots & 0 & 0\\\hline
             0 & \dots & 0 & \frac{-1}{z^2}
        \end{array}\right) + \left(\begin{array}{ccc|c}
             & & & 0 \\
             & \nabla^2\log f(x) & & \vdots\\
             & & & 0\\\hline
             0 & \dots & 0 & 0
        \end{array}\right).
    \]
    Then $\nabla^2\log g(x,z) \preceq 0$ if and only if $\nabla^2 \log f(x) \preceq 0$, so $g$ is log concave if and only if $f$ is log concave.

    This construction takes polynomial time and only increases the size of the input by a polynomial amount. Therefore, since it is coNP-hard to decide if a homogeneous quartic is log concave, it is also coNP-hard to decide if a homogeneous polynomial of degree $d \geq 4$ is log concave, as desired.
\end{proof}

\section{Log Concavity For Cubics}\label{sec:log concavity-cubic}
For a polynomial $f$ of degree $d \geq 3$, we know that it is coNP-hard to decide if $f$ is real stable (\cref{sec:stability}) or hyperbolic with respect to a given direction \cite{saunderson2019certifying}. However, our results in \cref{sec:quartic-log-concavity} only apply to polynomials of degree four or more.

In this section, we examine the reason for this gap by studying log concavity of cubic polynomials from two different perspectives. First, we examine the weaker property of directional log concavity, where we restrict ourselves to testing whether a polynomial is log concave in a given direction, and show that testing this property is coNP-hard for homogeneous cubics. On the other hand, in \cref{sec:homogeneous-cubics}, we show that we can decide if a homogeneous cubic is log concave in the usual sense in polynomial time.

\subsection{Directional Log Concavity}\label{sec:directional-log concavity}
In this section, we prove that testing whether a homogeneous cubic is log concave in a given direction is coNP-hard. The goal of this section is to show that it is coNP-hard to test directional log concavity of cubic polynomials.

First we give a formal definition of directional log concavity.
\begin{definition}
    We say that a polynomial $f\in\RR_{\geq 0}[x_1,\dots,x_n]$ is \emph{log concave in direction $v\in\RR^n$} if for all $x\in\RR^n_{\geq 0}$, $D_v^2\log f(x) \leq 0$.
\end{definition}

To show that testing directional log concavity is coNP-hard, we first relate testing directional log concavity to maximizing a homogeneous cubic.
\begin{theorem}
    \label{thm:cubic-log-concave-in-direction}
    Let $q(x)\in\RR_{\geq 0}[x_1,\dots,x_n]_3$. Then
    \[
        f(x,z) = z^3 + 3\lVert x\rVert^2 z + 2q(x)
    \]
    is log concave in the $z$ direction at all $(x,z) \in \RR^{n+1}_{\geq 0}$ if and only if $\max_{\lVert x\rVert = 1} q(x) \leq 1$.
\end{theorem}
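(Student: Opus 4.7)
The plan is to reduce directional log concavity in the $z$-direction to a one-parameter minimization and exploit homogeneity. First I would compute $D_z^2 \log f$ directly. Since $f_z = 3z^2 + 3\|x\|^2$ and $f_{zz} = 6z$, the inequality $D_z^2 \log f \leq 0$ is equivalent (wherever $f > 0$) to $f_z^2 - f_{zz} f \geq 0$. A short expansion collapses this to
\[
f_z^2 - f_{zz} f = 3\bigl(z^4 + 3\|x\|^4 - 4z\,q(x)\bigr),
\]
so the statement reduces to showing that $h(x,z) \coloneq z^4 + 3\|x\|^4 - 4z\,q(x) \geq 0$ on all of $\RR^{n+1}_{\geq 0}$ if and only if $\max_{\|x\|=1} q(x) \leq 1$.

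Next I would exploit that $h$ is homogeneous of degree $4$. For $x \neq 0$, set $\hat x = x/\|x\|$ and $t = z/\|x\| \geq 0$; then $h(x,z) = \|x\|^4\bigl(t^4 + 3 - 4t\,q(\hat x)\bigr)$. So nonnegativity of $h$ on $\RR^{n+1}_{\geq 0}$ reduces to $\phi_{\hat x}(t) \coloneq t^4 + 3 - 4t\,q(\hat x) \geq 0$ for every $t \geq 0$ and every unit vector $\hat x \in \RR^n_{\geq 0}$ (the edge cases $x=0$ and $z=0$ being immediate since $q$ has nonnegative coefficients).

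Then I would minimize $\phi_{\hat x}(t)$ over $t \geq 0$ for each fixed $\hat x$. When $q(\hat x) \leq 0$ the function is clearly $\geq 3$, so the interesting case is $q(\hat x) > 0$, where setting $\phi_{\hat x}'(t) = 4t^3 - 4q(\hat x) = 0$ gives the unique critical point $t_* = q(\hat x)^{1/3}$, at which $\phi_{\hat x}(t_*) = 3 - 3q(\hat x)^{4/3}$. Thus $\phi_{\hat x} \geq 0$ on $t \geq 0$ iff $q(\hat x)^{4/3} \leq 1$, iff $q(\hat x) \leq 1$. Taking the supremum over unit vectors $\hat x \in \RR^n_{\geq 0}$ yields the claimed equivalence.

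The calculation is essentially mechanical, so there is no real obstacle; the only subtlety worth being careful about is the domain. Since $q$ has nonnegative coefficients and $(x,z) \in \RR^{n+1}_{\geq 0}$, one has $f \geq 0$ automatically, and $f > 0$ except on the coordinate subspace where $z = 0$ and $q(x) = 0$; on that locus $D_z^2 \log f$ need not be defined, and I would simply note that the computation above shows the inequality at every point of $\RR^{n+1}_{\geq 0}$ where $f > 0$, which is the content of the definition.
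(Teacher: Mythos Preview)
Your proof is correct and follows essentially the same route as the paper. The paper packages the one-variable step as a separate lemma (a depressed cubic $z^3+bz+c$ with $b,c\ge 0$ is log concave on $\RR_{\ge 0}$ iff $4b^3\ge 27c^2$, proved by optimizing $ff''-(f')^2$ over $z$), then applies it with $b=3\|x\|^2$, $c=2q(x)$ and invokes homogeneity of $q$; your inline minimization of $t^4+3-4t\,q(\hat x)$ is exactly the same calculation after normalizing $x$, and the paper likewise uses nonnegativity of the coefficients of $q$ to pass between the full unit sphere and its nonnegative part.
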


To prove this, we first characterize log concavity for depressed cubics.
\begin{lemma}\label{lem:depressed-cubic-lc}
    Let $f(z) = z^3 + bz + c \in\RR_{\geq 0}[z]$. Then $f$ is log concave on $\RR_{\geq 0}$ if and only if $4b^3 \geq 27c^2$.
\end{lemma}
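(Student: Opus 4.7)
The plan is to reduce log concavity to a one-variable polynomial inequality and then minimize. Since $f\in\RR_{\geq 0}[z]$ has $b,c\geq 0$, we have $f(z)>0$ for all $z>0$, so log concavity of $f$ on $\RR_{>0}$ is equivalent to $(\log f)''\leq 0$ on $\RR_{>0}$, which in turn is equivalent to the numerator $f''(z)f(z) - (f'(z))^2$ being $\leq 0$ there. (We can ignore $z=0$ since the definition only requires log concavity on the strict positive orthant.)

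The first step is a direct computation. With $f'=3z^2+b$ and $f''=6z$,
\[
    f''(z)f(z) - (f'(z))^2 = 6z(z^3+bz+c) - (3z^2+b)^2 = -3z^4 + 6cz - b^2.
\]
Thus $f$ is log concave on $\RR_{\geq 0}$ if and only if the auxiliary polynomial
\[
    h(z) \coloneq 3z^4 - 6cz + b^2
\]
satisfies $h(z)\geq 0$ for all $z\geq 0$.

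Next, I would minimize $h$ on $\RR_{\geq 0}$. Since $h''(z)=36z^2\geq 0$, $h$ is convex, so its unique critical point $z_0$ (determined by $h'(z)=12z^3-6c=0$, giving $z_0=(c/2)^{1/3}$) is the global minimum. If $c=0$ then $z_0=0$ and $h\geq b^2\geq 0$ automatically, matching $4b^3\geq 0 = 27c^2$. For $c>0$, I would plug in $z_0$ and simplify:
\[
    h(z_0) = 3\Bigl(\tfrac{c}{2}\Bigr)^{4/3} - 6c\Bigl(\tfrac{c}{2}\Bigr)^{1/3} + b^2 = b^2 - \tfrac{9}{2^{4/3}}c^{4/3},
\]
using $6c\cdot(c/2)^{1/3} = \tfrac{12}{2^{4/3}}c^{4/3}$ and combining.

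Finally, I would convert $h(z_0)\geq 0$ into the stated form. Since both sides of $b^2\geq \tfrac{9}{2^{4/3}}c^{4/3}$ are nonnegative, I can raise to the $3/2$ power (a monotone map on $\RR_{\geq 0}$), obtaining the equivalent inequality $b^3 \geq \tfrac{27}{4}c^2$, i.e.\ $4b^3\geq 27c^2$. The only real obstacle is the slightly fiddly fractional-exponent algebra in the last two steps, but no serious obstacle is expected.
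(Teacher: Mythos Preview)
Your proof is correct and follows essentially the same approach as the paper: both compute $f''f-(f')^2=-3z^4+6cz-b^2$, locate the unique critical point at $z_0=(c/2)^{1/3}$, and evaluate there to obtain the equivalent inequality $b^2\geq 9(c/2)^{4/3}$, i.e.\ $4b^3\geq 27c^2$. The only cosmetic difference is that you work with $h=-(f''f-(f')^2)$ and minimize a convex function, whereas the paper maximizes the concave function $g=f''f-(f')^2$ directly.
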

\begin{proof}
    Define $g(z) = f^2 \frac{d^2}{dz^2}\log f = f\cdot f'' - (f')^2$. Then $f$ is log concave at $z$ if and only if $g(z) \leq 0$.

    By direct computation,
    \[g(z) = (z^3+bz+c)(6z) - (3z^2 + b)^2 = -3z^4 + 6cz -b^2.\]
    Then
    \[g'(z) = -12z^3 + 6c,\]
    so $g$ is minimized at $z = \left(\frac{c}{2}\right)^{1/3} \geq 0$, and
    \[g\left(\left(\frac{c}{2}\right)^{1/3}\right) = 9\left(\frac{c}{2}\right)^{4/3} - b^2.\]
    Thus, $f$ is log concave if and only if $b^2 \geq 9\left(\frac{c}{2}\right)^{4/3}$, which happens if and only if $4b^3 \geq 27c^2$, as desired.
\end{proof}

\begin{proof}[Proof of \cref{thm:cubic-log-concave-in-direction}]
    Fix $x \in \RR^n_{\geq 0}$. By \cref{lem:depressed-cubic-lc}, $f$ is log concave (as a function of $z$) at $x$ if and only if $\lVert x\rVert^6 \geq q(x)^2$. Since $q$ is homogeneous of degree three, this happens if and only if $q(\frac{x}{\lVert x \rVert})^2 \leq 1$.

    Since $\{x\in\RR^n : \lVert x \rVert = 1\}$ is compact, we know that $q$ achieves its maximum on this set at some $x^*$. Moreover, since $q$ has nonnegative coefficients, we may assume without loss of generality that $x^*\in\RR^n_{\geq 0}$. If $q(x^*) > 1$, then $q(x^*)^2 > \lVert x^*\rVert^6$, and so $f$ is not log concave in the $z$ direction at $(x^*,z^*)$ for some $z^* \geq 0$.

    Conversely, if $q(x^*) \leq 1$, then for any $x \in \RR^n_{\geq 0}$, $q(x) \leq \lVert x\rVert^3$, and so $f$, as a function of $z$, satisfies the conditions of \cref{lem:depressed-cubic-lc} and is therefore log concave in the $z$ direction at all $(x,z)\in\RR^{n+1}_{\geq 0}$.
\end{proof}

By the same argument as we used in \cref{thm:cubic-stable-hard}, it is coNP-hard to decide if $\max_{\lVert x\rVert=1} q(x) \leq 1$, so we get the following hardness result for directional log concavity.

\begin{theorem}
    It is coNP-hard to determine if a homogeneous cubic is log concave on the positive orthant in a given direction.
\end{theorem}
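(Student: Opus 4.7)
The plan is to mimic the argument from \cref{thm:cubic-stable-hard}, replacing the use of \cref{thm:hyp-iff-max-le-1} with the newly-established \cref{thm:cubic-log-concave-in-direction}. The key observation is that both characterizations have the same right-hand condition, namely $\max_{\lVert x\rVert = 1} q(x) \leq 1$ for some homogeneous cubic $q$, so a reduction from max-clique through Nesterov's formula should go through almost verbatim.

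More concretely, I would take an arbitrary instance $(G,k)$ of the (coNP-hard) problem of deciding whether $\omega(G) \leq k$, and use \cref{lem:le-k-iff-le-ratl} to obtain in polynomial time a positive rational $\ell = \ell(k)$ with the property that $\omega(G) \leq k$ if and only if $\max_{\lVert (x,y)\rVert^2 = 1}q_G(x,y) \leq \ell$. Setting $q(x,y) \coloneq q_G(x,y)/\ell$, this is a homogeneous cubic with nonnegative rational coefficients satisfying $\max_{\lVert (x,y)\rVert = 1} q(x,y) \leq 1$ if and only if $\omega(G) \leq k$. Then I would define
\[
  f(x,y,z) = z^3 + 3\left(\lVert x\rVert^2 + \lVert y\rVert^2\right)z + 2q(x,y),
\]
which is a homogeneous cubic in $2n+1$ variables with nonnegative rational coefficients.

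By \cref{thm:cubic-log-concave-in-direction} (applied with the $(x,y)$ variables playing the role of $x$ there), $f$ is log concave in the $z$ direction at every point of $\RR^{2n+1}_{\geq 0}$ if and only if $\max_{\lVert(x,y)\rVert = 1}q(x,y) \leq 1$, which by construction is equivalent to $\omega(G) \leq k$. Since the construction of $\ell$, $q$, and $f$ can clearly be carried out in polynomial time in the size of $(G,k)$, this is a polynomial-time reduction from the coNP-hard clique problem to deciding directional log concavity of a homogeneous cubic in the fixed direction $e_z$.

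I do not expect any real obstacle here, since all of the heavy lifting has been done in \cref{lem:le-k-iff-le-ratl} and \cref{thm:cubic-log-concave-in-direction}; the one thing to be slightly careful about is that the direction is part of the input in the statement being proved, so I would emphasize that the direction $e_z$ is fixed and rational (hence certainly polynomial-time computable), so it causes no trouble in the reduction.
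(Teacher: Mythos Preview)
Your proposal is correct and follows essentially the same approach as the paper: the paper's proof consists of a single sentence pointing back to the argument of \cref{thm:cubic-stable-hard}, and what you have written is precisely that argument spelled out, with \cref{thm:cubic-log-concave-in-direction} substituted for \cref{thm:hyp-iff-max-le-1}. The only minor difference is that you are more explicit about the rationality of the direction and of the coefficients, which is a welcome clarification rather than a deviation.
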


\subsection{Log Concavity for Homogeneous Cubics}\label{sec:homogeneous-cubics}
Although it is coNP-hard to test whether a homogeneous cubic is log concave in a particular direction, we can test whether one is log concave on $\RR^n_{>0}$ in polynomial time.

Since we can test complete log concavity in polynomial time (\cref{prop:clc-poly-time}), it suffices to show that a homogeneous cubic is log concave if and only if it is completely log concave.
\begin{theorem}
    Let $f\in\RR_{\geq 0}[x_1,\dots,x_n]_3$ be a homogeneous cubic. Then $f$ is log concave on $\RR^n_{> 0}$ if and only if it is completely log concave with respect to $\RR^n_{> 0}$.
\end{theorem}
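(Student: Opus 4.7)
The plan is to invoke \cref{thm:clc-conditions}, which for a homogeneous cubic (degree $d=3$) reduces complete log concavity to two conditions: (i) $f$ and each first partial $\partial_i f$ are indecomposable, and (ii) each $\partial_i f$ is log concave on $\RR^n_{\geq 0}$. The reverse implication, that complete log concavity implies log concavity, is immediate from \cref{def:completely-log concave} by taking $k = 0$, so all the work is in the forward direction.

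The key structural observation to exploit is that for a cubic $f$, the Hessian is linear in $x$:
\[\nabla^2 f(x) = \sum_{j=1}^n x_j H_j, \qquad H_j := \nabla^2 \partial_j f,\]
where each $H_j$ is a constant symmetric matrix (since $\partial_j f$ is quadratic). To establish (ii), I would fix $i$ and evaluate at $w_\epsilon := e_i + \epsilon \ones \in \RR^n_{>0}$. Since $f$ is a nonzero polynomial with nonnegative coefficients, $f(w_\epsilon) > 0$, so \cref{thm:log concave-hessian} together with the log concavity hypothesis force $\nabla^2 f(w_\epsilon) = H_i + \epsilon \sum_j H_j$ to have at most one positive eigenvalue. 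Letting $\epsilon \to 0^+$ and using continuity of the ordered eigenvalues of a symmetric matrix in its entries, $H_i$ itself has at most one positive eigenvalue. Provided $\partial_i f \not\equiv 0$ (otherwise $f$ does not depend on $x_i$ and we pass to a smaller problem), Euler's identity yields $\ones^T H_i \ones = 2\partial_i f(\ones) > 0$, so $H_i$ has exactly one positive eigenvalue, and a second application of \cref{thm:log concave-hessian} to the quadratic $\partial_i f$ gives its log concavity.

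For (i), I would argue by contradiction via Euler's identity. If $f = f_1 + f_2$ with $f_1, f_2$ nonzero polynomials in disjoint variable sets, then $\nabla^2 f(\ones)$ is block diagonal with one block per summand. Applying Euler's identity to each cubic $f_j$ shows that the restriction of $\ones$ to its variable set yields a strictly positive quadratic form on the corresponding block, so each block has at least one positive eigenvalue. Thus $\nabla^2 f(\ones)$ has at least two positive eigenvalues, contradicting log concavity of $f$ at $\ones \in \RR^n_{>0}$ (where $f(\ones) > 0$). The same block-diagonal plus Euler argument, now applied to a log concave quadratic $\partial_i f$ produced by (ii), shows each $\partial_i f$ is indecomposable.

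The main obstacle is the continuity-of-eigenvalues step in (ii), where care is needed to ensure that the ``at most one positive eigenvalue'' condition is preserved in the limit $\epsilon \to 0^+$; this rests on continuity of the ordered eigenvalue functions of symmetric matrices but deserves explicit mention. The rest is routine algebraic manipulation together with a small reduction for the edge case $\partial_i f \equiv 0$.
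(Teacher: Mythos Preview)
Your proposal is correct and follows essentially the same approach as the paper: both exploit the fact that for a cubic the Hessian is linear, $\nabla^2 f(x)=\sum_j x_j\,\nabla^2\partial_j f$, and then pass to the boundary point $e_i$ by continuity of eigenvalues to conclude that each $\nabla^2\partial_i f$ has at most one positive eigenvalue.

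The only noteworthy difference is in the bookkeeping for complete log concavity. The paper invokes \cite[Lemma~3.4]{anari2018logconcave3}, which for an indecomposable cubic requires only that each $\partial_i f$ be log concave; it then cites the same reference for ``log concave $\Rightarrow$ indecomposable.'' You instead work directly from \cref{thm:clc-conditions}, which also asks that each $\partial_i f$ be indecomposable, and you supply the block-diagonal plus Euler argument yourself. That makes your write-up self-contained relative to the results actually stated in the paper, at the cost of a little extra work. Two small remarks: the detour through ``exactly one positive eigenvalue'' via $\ones^T H_i\ones=2\partial_i f(\ones)>0$ is not needed to conclude log concavity of the quadratic $\partial_i f$ (``at most one'' already suffices, and the positivity is automatic at any point where $\partial_i f>0$); and the ``pass to a smaller problem'' aside for $\partial_i f\equiv 0$ is harmless but unnecessary, since the zero polynomial is trivially log concave and indecomposable.
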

\begin{proof}
    One direction is immediate: if $f$ is completely log concave then it is by definition also log concave.

    It remains to show that if $f$ is log concave, then it is also completely log concave. First, since $f$ is log concave, then it is indecomposable \cite{anari2018logconcave3}. Therefore, in order to show it is completely log concave, it suffices to show that $\partial_1 f,\dots,\partial_n f$ are all log concave \cite[Lemma 3.4]{anari2018logconcave3}.
    
    Since $f$ is homogeneous of degree 3, by Euler's identity,
    \[
        3f(x) = \sum_{i=1}^nx_i\partial_if(x).
    \]

    Let $E_{ij}$ denote the $n\times n$ matrix with a one in the $(i,j)$th entry and zeros elsewhere. Using the product rule for the Hessian,
    \begin{align*}
        3\nabla^2f(x) &= \sum_{i=1}^n x_i\nabla^2\partial_if(x) + \sum_{i=1}^n(e_i(\nabla \partial_if(x))^T + (\nabla \partial_if(x)) e_i^T)\\
            &= \sum_{i=1}^n x_i\nabla^2\partial_if(x) + \sum_{i=1}^n\sum_{j=1}^n\partial^2_{ij}f(x) E_{ij} + \partial^2_{ij}f(x)E_{ji}\\
            &= \sum_{i=1}^n x_i\nabla^2\partial_if(x) + 2\nabla^2 f(x),
    \end{align*}
    so
    \[
        \nabla^2 f(x) = \sum_{i=1}^n x_i\nabla^2\partial_if(x).
    \]

    Since $\partial_i f$ is a homogeneous polynomial of degree two, then $\nabla^2 \partial_i f$ does not depend on $x$. Let $M_i = \nabla^2\partial_i f$, and let $Q(x) = \sum_{i=1}^n x_iM_i$, so $\nabla^2f(x) = Q(x)$. It remains to show that $\lambda_2(M_i) \leq 0$ for all $i=1,\dots,n$.

    Since $f$ is log concave by assumption, then $\lambda_2(Q(x)) \leq 0$ for all $x\in\RR^n_{> 0}$. Since eigenvalues are continuous functions in the entries of a matrix, then $\lambda_2(Q(x)) \leq 0$ for all $x\in\RR^n_{\geq 0}$. Since $M_i = Q(e_i)$, this implies that $\lambda_2(M_i) \leq 0$, so $\partial_i f$ is log concave, as desired.
\end{proof}

Since we can decide if a polynomial is completely log concave in polynomial time, this has immediate implications for testing log concavity of homogeneous cubics.
\begin{corollary}
    There is a polynomial time algorithm to decide if a homogeneous cubic polynomial is log concave on $\RR^n_{\geq 0}$.
\end{corollary}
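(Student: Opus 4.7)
The plan is to obtain this as an immediate corollary of the two preceding results. The theorem just proved shows that for a homogeneous cubic $f \in \RR_{\geq 0}[x_1,\dots,x_n]_3$, log concavity on $\RR^n_{>0}$ coincides with complete log concavity on $\RR^n_{>0}$. Meanwhile, \cref{prop:clc-poly-time} gives an explicit algorithm that decides complete log concavity of a homogeneous polynomial of fixed degree $d$ in time $O(n^{d+1})$. Instantiating that algorithm at $d=3$ therefore also decides log concavity.

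Concretely, I would state the reduction in one sentence: on input $f \in \RR_{\geq 0}[x_1,\dots,x_n]_3$, run the algorithm of \cref{prop:clc-poly-time} to decide complete log concavity; by the preceding theorem its output is also the correct answer for log concavity. The running time is $O(n^4)$, which is polynomial in $n$.

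There is essentially no obstacle, since both ingredients are already in hand; the only minor bookkeeping point is to note that the hypothesis of nonnegative coefficients required by \cref{thm:clc-conditions} (and hence by \cref{prop:clc-poly-time}) is built into the statement we are proving, as log concavity on $\RR^n_{\geq 0}$ is only defined for $f \in \RR_{\geq 0}[x_1,\dots,x_n]$. Thus the two-line proof is just "combine the previous theorem with \cref{prop:clc-poly-time}," and the resulting algorithm runs in $O(n^4)$ time.
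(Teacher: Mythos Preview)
Your proposal is correct and matches the paper's approach exactly: the corollary is obtained by combining the preceding theorem (log concavity equals complete log concavity for homogeneous cubics) with \cref{prop:clc-poly-time}. The paper does not even write out a separate proof for this corollary, and your added observation that the running time specializes to $O(n^4)$ is a harmless bonus.
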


\section{Conclusion and Future Work}
This paper discusses stable, completely log-concave, and log-concave polynomials. In particular, we show that it is coNP-hard to test if polynomials are stable or log concave. While stable and log-concave polynomials are useful tools, this implies that (assuming P $\neq$ NP), it is intractible to test if a given polynomial is stable or log concave, unless additional structure is present.

Throughout this paper, we used reductions from the max clique problem to show that testing our desired properties is coNP-hard. However, our reductions did not use the full strength of max clique: the max clique problem is not only NP-hard, but also NP-hard to approximate. This hardness of approximation should allow us to strengthen our results, but the exact form of this conjectured stronger result remains open.

\paragraph{Acknowledgements} I would like to thank Cynthia Vinzant for many helpful discussions and comments. I would also like to thank Thomas Rothvoss and Rekha Thomas for their thoughtful suggestions, as well as Amir Ali Ahmadi for his feedback. The author was partially supported by NSF grant No.~DMS-2153746.

\bibliographystyle{plainnat}
\bibliography{ref}
\end{document}